\newtheorem{theorem}{Theorem}[section]
\newtheorem{proposition}[theorem]{Proposition}
\newtheorem{corollary}[theorem]{Corollary}
\newtheorem{lemma}[theorem]{Lemma}
\newtheorem{remark}[theorem]{Remark}
\newcommand{\N}{{\mathbb N}}
\newcommand{\R}{{\mathbb R}}
\newcommand{\honealpha}{(H1\alpha_n)}
\newcommand{\sigmaonealpha}{\sigma_1}
\newcommand{\htwoalpha}{(H2\alpha_n)}
\newcommand{\sigmatwoalpha}{\sigma_2}
\newcommand{\hthreealpha}{(H3\alpha_n)}
\newcommand{\sigmathreealpha}{\sigma_3}
\newcommand{\honelambda}{(H1\lambda_n)}
\newcommand{\sigmaonelambda}{\gamma_1}
\newcommand{\htwolambda}{(H2\lambda_n)}
\newcommand{\hthreelambda}{(H3\lambda_n)}
\newcommand{\sigmathreelambda}{\gamma_3}
\newcommand{\boundlambdam}{\Lambda_m}
\newcommand{\honeen}{(H1e_n)}
\newcommand{\sigmaoneen}{\theta_1}
\newcommand{\htwoen}{(H2e_n)}
\newcommand{\sigmatwoen}{\theta_2}
\newcommand{\hthreeen}{(H3e_n)}
\newcommand{\sigmathreeen}{E}
\newcommand{\KzVAM}{K_z^*}
\newcommand{\chiVAM}{\chi^*}
\newcommand{\phiVAM}{\Phi^*}
\newcommand{\psiVAM}{\Psi^*}
\newcommand{\thetaVAM}{\Theta^*}
\newcommand{\ebase}{e^*}
\newcommand{\honestarlambda}{(H1\lambda_n^*)}
\newcommand{\sigmaonestarlambda}{\gamma_1^*}
\title{Quantitative asymptotic regularity of the VAM iteration with error terms for $m$-accretive operators in Banach spaces}
\author{Paulo Firmino${}^{a}$ and Lauren{\c t}iu Leu{\c s}tean${}^{b,c,d}$\\[2mm]
\footnotesize ${}^{a}$ Departamento de Matem\'atica, Faculdade de Ci\^encias, Universidade de Lisboa\\ 
\footnotesize ${}^{b}$ LOS, Faculty of Mathematics and Computer Science, University of Bucharest\\
\footnotesize ${}^{c}$ Simion Stoilow Institute of Mathematics of the Romanian Academy\\
\footnotesize ${}^{d}$ Institute for Logic and Data Science, Bucharest\\[1mm]
\footnotesize Emails: \protect\url{fc49883@alunos.ciencias.ulisboa.pt }, \protect\url{laurentiu.leustean@unibuc.ro}
}
\begin{document}

\date{}

\maketitle

\begin{abstract}

\noindent In this paper we obtain, by using proof mining methods, quantitative results on the asymptotic regularity of the viscosity approximation method (VAM) with error  terms for $m$-accretive operators in Banach spaces. For concrete instances of the parameter sequences, linear rates are computed by applying a lemma due to Sabach and Shtern.\\

\noindent {\em Keywords:} Quantitative asymptotic regularity; Viscosity approximation method;  $m$-accretive operators; Proof mining.\\

\noindent  {\it Mathematics Subject Classification 2010}:  47H05, 47H09, 47J25.

\end{abstract}

\section{Introduction}

Let $X$ be a normed space,  $A:X\to 2^X$ be an accretive operator with a nonempty set 
of zeros, and  $C\subseteq X$ be a nonempty closed convex subset of $X$ such that 
$\overline{\mathrm{dom}A} \subseteq C\subseteq  \mathrm{ran}(\mathrm{Id}+\gamma A)$ for all $\gamma>0$.  Xu et al. \cite{XuAltImtSou22} studied recently the following iteration:
\begin{equation}\label{def-VAM}
\mathrm{VAM} \qquad x_0=x, \quad x_{n+1}=\alpha_n f(x_n) +(1-\alpha_n)J_{\lambda_n}^Ax_n,
\end{equation}
where $x\in C$, $f:C\to C$ is an $\alpha$-contraction for $\alpha\in[0,1)$, $(\lambda_n)$ is 
a sequence in $(0,\infty)$, $(\alpha_n)$ is a sequence in $[0,1]$, and, for every $n\in\N$, $J_{\lambda_n}^A$ is 
the resolvent of order $\lambda_n$ of $A$.

The VAM  iteration is an instance of the viscosity approximation method 
applied to resolvents of accretive operators in Banach spaces (see, for example, \cite{KopRei08,Rei80,Rei94,Att96,Mou00,Xu04}). If one takes $f(x)=u\in X$ in 
\eqref{def-VAM}, one gets the Halpern-type Proximal point algorithm HPPA, introduced by  
Kamimura and Takahashi \cite{KamTak00} and Xu \cite{Xu02}, a modification of the Proximal point 
algorithm that was studied in a series of papers in recent years.  Thus, VAM is a viscosity 
version of the HPPA.

Xu et al. \cite{XuAltImtSou22} proved, in the setting of uniformly convex 
and/or uniformly G\^ateaux differentiable Banach spaces,  
strong convergence results for the VAM iteration towards a zero of $A$, extending results for the HPPA obtained by 
Aoyama and Toyoda \cite{AoyToy17}. 

As it is the case with numerous convergence proofs, an intermediate step is to obtain the asymptotic regularity of the iteration.
Asymptotic regularity  was introduced by Browder and Petryshyn \cite{BroPet66}  for the Picard iteration and extended to general iterations by Borwein, Reich, and Shafrir \cite{BorReiSha92}. 
By inspecting the proofs from  \cite{XuAltImtSou22}, one can see that 
asymptotic regularity of the VAM iteration holds, under some hypotheses on $(\alpha_n)$, $(\lambda_n)$,  
in the more general setting of Banach spaces. 

In this paper we prove quantitative asymptotic regularity  results for the VAMe iteration for $m$-accretive operators,  defined by adding error terms  to the VAM iteration (see \eqref{def-VAMe}). These quantitative results provide uniform rates of 
asymptotic regularity, $\left(J_{\lambda_n}^A\right)$-asymptotic regularity and, for all 
$m\in\N$,  $J_{\lambda_m}^A$-asymptotic regularity for VAMe. 
We compute linear such rates for concrete instances of the parameter sequences $(\alpha_n)$, 
$(\lambda_n)$, as an application of a lemma of Sabach and Shtern \cite{SabSht17}.  As VAM and HPPA  for $m$-accretive operators are particular cases of our VAMe iteration, we obtain rates for these iterations, too.
Furthermore, as an immediate consequence of our quantitative results, we obtain 
qualitative  asymptotic regularity results for the VAMe iteration.

The results from the paper are obtained by applying methods of proof mining, a research program 
concerned with the extraction, by using proof-theoretic techniques, of new quantitative and 
qualitative information from mathematical proofs.  We refer to Kohlenbach's textbook \cite{Koh08} 
for details on proof mining and to \cite{Koh19,Koh20} for surveys of recent applications in nonlinear analysis and optimization. 
Finally, let us remark that proof mining was applied recently by Kohlenbach and Pinto \cite{KohPin22} to obtain quantitative results, providing rates of metastability, for viscosity approximation methods in $W$-hyperbolic spaces.

\section{Preliminaries}

Let $X$ be a normed space and $A:X \to 2^X$ be a set-valued operator on $X$. As usual, we identify the operator $A$ with its graph 
$ \mathrm{gra} A=\{(x,y)\in X \times X \mid  y\in Ax\}$. Let $\mathrm{dom}A=\{x\in X \mid  Ax \neq \emptyset\}$ be the domain of $A$ and 
$\mathrm{ran}A=\bigcup_{x\in X} Ax$ be the range of $A$. Furthermore, we denote by $\mathrm{zer} A$ the set of zeros of $A$, that is
$\mathrm{zer} A=\{x\in X\mid 0\in Ax\}$. The definition of the inverse $A^{-1}$ of $A$ is given through its graph: 
$\mathrm{gra} A^{-1}=\{(y,x)\in X \times X \mid (x,y)\in \mathrm{gra} A\}= \{(y,x)\in X \times X \mid y\in Ax\}$. If $\lambda \in\R$ and $B$ is another set-valued operator on $X$, then
$\lambda A =\{(x,\lambda y)\mid x\in X, y\in Ax\}$ and $A+B=\{(x, y+z)\mid x\in X, y\in Ax, z\in Bx\}$. 
For every $\gamma>0$, the resolvent $J_{\gamma}^A$ of order $\gamma$ of $A$ is  defined by $J_{\gamma}^A=(\mathrm{Id}+\gamma A)^{-1}$, where $\mathrm{Id}$ 
is the identity operator on $X$. One can easily verify that $\mathrm{dom}J_{\gamma}^A=\mathrm{ran}(\mathrm{Id}+\gamma A)$ and  $\mathrm{ran}J_{\gamma}^A = \mathrm{dom}A$. 

Let us recall that if $\emptyset \ne C\subseteq X$ and  $T:C\to X$ is a mapping, we denote by $\mathrm{Fix}(T)$ 
the set of fixed points of $T$ and $T$ is said to be nonexpansive if $\|Tx-Ty\|\le \|x-y\|$ for all $x,y\in X$.

An operator $A$ is said to be \emph{accretive} \cite{Bro67,Kat67} if for all $x,y\in \mathrm{dom}A$, $u\in Ax$, $v\in Ay$, and $\gamma>0$,
$$
\|x-y+\gamma(u-v)\|\geq \|x-y\|.
$$

It is well-known that for any accretive operator $A$ and for all $\gamma>0$, 
$J_{\gamma}^A:\mathrm{ran}(\mathrm{Id}+\gamma A)\to \mathrm{dom}A$  is a nonexpansive mapping such that $\mathrm{Fix}(J_{\gamma}^A)=\mathrm{zer} A$ 
(see, for example, \cite[Corollary 3.4.1]{GarKha23}  and \cite[Proposition 6.7.1]{GarKha23} for proofs). 

\begin{lemma}
Assume that $A$ is an accretive operator. 
Let  $\lambda,\gamma>0$.
\begin{enumerate}
\item If  $x\in \mathrm{ran}(\mathrm{Id}+\lambda A)$, then $\displaystyle \frac{\gamma}{\lambda}x+\left(1-\frac{\gamma}{\lambda}\right)J^A_\lambda x \in \mathrm{ran}(\mathrm{Id}+\gamma A)$ and 
\begin{align} 
J^A_\lambda x & =J^A_\gamma\left(\frac{\gamma}{\lambda}x+\left(1-\frac{\gamma}{\lambda}\right)J^A_\lambda x\right). \label{J-eq}
\end{align}
\item For all $x\in \mathrm{ran}(\mathrm{Id}+\lambda A)\cap \mathrm{ran}(\mathrm{Id}+\gamma A)$, 
\begin{align}
\|J^A_{\gamma}x-J^A_{\lambda}x\| & \le \left|1-\frac{\gamma}{\lambda}\right| \left\|J^A_{\lambda} x-x \right\|. \label{J-ineq}
\end{align}
\end{enumerate}
\end{lemma}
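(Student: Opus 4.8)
The plan is to reduce everything to the defining property of the resolvent of an accretive operator: for $\gamma>0$ one has $J_\gamma^A w = v$ exactly when $w\in v+\gamma Av$, that is, when $\frac{w-v}{\gamma}\in Av$; moreover such a $v$ is unique because accretivity forces $J_\gamma^A$ to be single-valued (and nonexpansive) on $\mathrm{ran}(\mathrm{Id}+\gamma A)$, as recalled above from \cite{GarKha23}.

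For (i), set $y:=J_\lambda^A x$ and $z:=\frac{\gamma}{\lambda}x+\left(1-\frac{\gamma}{\lambda}\right)y$. Since $x\in\mathrm{ran}(\mathrm{Id}+\lambda A)$, the identity $y=J_\lambda^A x$ means $x\in y+\lambda Ay$, i.e.\ $\frac{x-y}{\lambda}\in Ay$. The key observation is then that $z-y=\frac{\gamma}{\lambda}(x-y)$, so $\frac{z-y}{\gamma}=\frac{x-y}{\lambda}\in Ay$. Hence $z\in y+\gamma Ay\subseteq\mathrm{ran}(\mathrm{Id}+\gamma A)$, which is the membership claim, and by single-valuedness of $J_\gamma^A$ we get $J_\gamma^A z=y=J_\lambda^A x$, which is \eqref{J-eq}.

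For (ii), assume in addition $x\in\mathrm{ran}(\mathrm{Id}+\gamma A)=\mathrm{dom}J_\gamma^A$, so $J_\gamma^A x$ is defined. Using part (i) and the nonexpansivity of $J_\gamma^A$,
\[
\|J_\gamma^A x-J_\lambda^A x\|=\left\|J_\gamma^A x-J_\gamma^A z\right\|\le\|x-z\|,
\]
and since $x-z=\left(1-\frac{\gamma}{\lambda}\right)(x-y)=\left(1-\frac{\gamma}{\lambda}\right)\left(x-J_\lambda^A x\right)$, the right-hand side equals $\left|1-\frac{\gamma}{\lambda}\right|\,\|J_\lambda^A x-x\|$, giving \eqref{J-ineq}.

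The computations are elementary, so there is no genuine obstacle here; the only points that require attention are (a) invoking that accretivity of $A$ makes $J_\gamma^A$ a well-defined single-valued nonexpansive map on $\mathrm{ran}(\mathrm{Id}+\gamma A)$ — already quoted in the preliminaries — and (b) keeping the domain bookkeeping straight, so that each resolvent that appears is evaluated at a point of its domain. This is precisely what the membership statement in (i) secures for $z$, and it is the reason the extra hypothesis $x\in\mathrm{ran}(\mathrm{Id}+\gamma A)$ must be imposed in (ii).
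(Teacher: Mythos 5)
Your proposal is correct. For part (ii) your argument is exactly the paper's: apply \eqref{J-eq} and the nonexpansivity of $J^A_\gamma$, then compute $x-z=\left(1-\frac{\gamma}{\lambda}\right)\left(x-J^A_\lambda x\right)$. For part (i) the paper gives no proof at all but simply cites \cite[Proposition 3.4.1]{GarKha23}; your direct verification --- setting $y=J^A_\lambda x$, $z=\frac{\gamma}{\lambda}x+\left(1-\frac{\gamma}{\lambda}\right)y$, and observing that $\frac{z-y}{\gamma}=\frac{x-y}{\lambda}\in Ay$, so that $z\in(\mathrm{Id}+\gamma A)y$ and $J^A_\gamma z=y$ by single-valuedness of the resolvent --- is the standard proof of that cited result and is carried out correctly, including the domain bookkeeping that justifies evaluating $J^A_\gamma$ at $z$.
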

\begin{proof}
For a proof of \eqref{J-eq} see \cite[Proposition 3.4.1]{GarKha23}. \eqref{J-ineq} follows immediately from \eqref{J-eq} and the fact that $J^A_{\gamma}$ is nonexpansive:
\begin{align*}
\|J^A_{\gamma}x-J^A_{\lambda}x\|  & = 
\left\|J^A_{\gamma}x- J^A_{\gamma}\left(\frac{\gamma}{\lambda}x+
\left(1-\frac{\gamma}{\lambda}\right)J^A_{\lambda} x\right)\right\| \\
&  \leq \left\|x- \frac{\gamma}{\lambda}x-
\left(1-\frac{\gamma}{\lambda}\right)J^A_{\lambda} x   \right\|= \left|1-\frac{\gamma}{\lambda}\right| \left\|x - J^A_{\lambda} x \right\|.
\end{align*}
\end{proof}

An  \emph{$m$-accretive}  operator is an accretive operator $A$  that satisfies $\mathrm{ran}(\mathrm{Id}+\gamma A)=X$  for all $\gamma >0$. 
It follows that, for an $m$-accretive operator $A$, \eqref{J-eq}  and \eqref{J-ineq} hold for all $x\in X$.

\section{Quantitative notions and lemmas}

Let us recall the main quantitative notions that will be used in this paper. 
Suppose that $(a_n)_{n\in\N}$ is a sequence in a metric space $(X,d)$. A mapping  $\varphi:\N\to\N$ 
is said to be 
\begin{enumerate}
\item a Cauchy modulus of $(a_n)$ if for all $k\in\N$ and all $n\geq \varphi(k)$,
$$d(a_{n+p},a_n)\leq 
\frac1{k+1} \quad \text{holds for all~}p\in\N.$$
\item  a rate of convergence of $(a_n)$ (towards $a\in X$) if for all $k\in\N$ and all $n\geq \varphi(k)$,
\[d(a_n,a)\leq \frac1{k+1}.\]
\end{enumerate}

Obviously, $(a_n)$ is Cauchy iff $(a_n)$ has a Cauchy modulus, and $\lim\limits_{n\to\infty} a_n=a$ iff  
$(a_n)$ has a rate of convergence towards $a$.

Assume that $\sum\limits_{n=0}^\infty b_n$ is a series of nonnegative real numbers and 
$\left(\tilde{b}_n=\sum\limits_{i=0}^{n} b_i\right)$ is the sequence of partial sums. 
Then a Cauchy modulus of the series is  a Cauchy modulus of $\left(\tilde{b}_n\right)$. 
A rate of divergence of the series is a mapping $\theta:\N\to\N$  satisfying 
$\sum\limits_{i=0}^{\theta(n)} b_i \geq n$ for all $n\in\N$. It is clear that 
$\sum\limits_{n=0}^\infty b_n$ diverges iff it has a rate of divergence. \\

Let  $(y_n)$ be a sequence in a metric space $(X,d)$, $\emptyset \neq C\subseteq X$, $T:C\to C$, $(T_n : C \to C)_{n\in\N}$ be 
a countable family of mappings, and $\Phi:\N\to\N$. We say that 
\begin{enumerate}
\item $(y_n)$ is asymptotically regular with rate $\Phi$ \big(or $\Phi$  is a rate of asymptotic regularity of $(y_n)$\big) 
if $\lim\limits_{n\to \infty} d(y_n,y_{n+1})=0$ with rate of convergence $\Phi$;
\item $(y_n)$ is $T$-asymptotically regular with rate $\Phi$ \big(or  $\Phi$  is a rate of $T$-asymptotic regularity of $(y_n)$\big)
if $\lim\limits_{n\to \infty} d(y_n,Ty_n)=0$ with rate of convergence $\Phi$;
\item $(y_n)$ is $(T_n)$-asymptotically regular with rate $\Phi$ \big(or  $\Phi$  is a rate of $(T_n)$-asymptotic regularity of $(y_n)$\big) 
if $\lim\limits_{n\to \infty} d(y_n,T_ny_n)=0$ with rate of convergence $\Phi$.
\end{enumerate} 

\subsection{Useful lemmas on sequences of real numbers}

\begin{lemma}\label{thetngeqn-2}
If $(b_n)$ is a sequence in $[0,1]$ and $\theta$ is a rate of divergence for $\sum\limits_{n=0}^\infty b_n$, then
$\theta(n)\geq n-2$ for all $n\in\N$.
\end{lemma}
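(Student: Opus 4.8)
The plan is to use nothing more than the boundedness of the summands together with the defining inequality of a rate of divergence. First I would record the elementary observation that for every $N\in\N$ the partial sum $\sum_{i=0}^{N}b_i$ is a sum of $N+1$ real numbers, each lying in $[0,1]$, hence $\sum_{i=0}^{N}b_i\le N+1$. Instantiating this at $N=\theta(n)$ and combining it with the inequality $\sum_{i=0}^{\theta(n)}b_i\ge n$, which holds by the definition of a rate of divergence of $\sum_{n=0}^{\infty}b_n$, yields $n\le\theta(n)+1$, i.e.\ $\theta(n)\ge n-1$ for all $n\in\N$. Since $n-1\ge n-2$, this already gives the claim.

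The proof is therefore essentially a one-line counting argument, and I do not expect any genuine obstacle: the only ingredient is the estimate $\sum_{i=0}^{N}b_i\le N+1$ for sequences in $[0,1]$, which is immediate since the sum has $N+1$ terms. I would also note in passing that the argument in fact establishes the sharper bound $\theta(n)\ge n-1$; the slightly weaker form $\theta(n)\ge n-2$ is presumably recorded here because that is the shape in which the inequality is later invoked (e.g.\ when composing it with other modulus constructions), and the looser constant costs nothing.
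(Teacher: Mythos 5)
Your proof is correct and uses essentially the same idea as the paper, namely bounding the partial sum $\sum_{i=0}^{\theta(n)}b_i$ by the number of its terms; the paper phrases this as a proof by contradiction, while you argue directly and in fact obtain the sharper bound $\theta(n)\ge n-1$.
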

\begin{proof}
Assume  that $\theta(n) <  n-2$ for some  $n\in\N$. It follows that 
$
\sum\limits_{i=0}^{\theta(n)} b_i  \leq \sum\limits_{i=0}^{n-2} b_i  \leq n-1<n,
$
which is a contradiction.
\end{proof}

\begin{lemma}\label{cauchy-modulus-linear-comb}
Let $(a_n)$, $(b_n)$ be sequences of nonnegative real numbers, $p,q\in\N$, and $c_n=pa_n+qb_n$ for all $n\in\N$. 
Assume that $(a_n)$ is Cauchy with Cauchy modulus $\varphi_1$ and 
$(b_n)$ is Cauchy with Cauchy modulus $\varphi_2$. Then $(c_n)$ is Cauchy with Cauchy modulus 
$$ \varphi(k)=\max\{\varphi_1(2p(k+1)-1), \varphi_2(2q(k+1)-1)\}.$$
\end{lemma}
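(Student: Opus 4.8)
The plan is to reduce everything to the triangle inequality together with the two given Cauchy moduli. Fix $k\in\N$ and let $n\ge\varphi(k)=\max\{\varphi_1(2p(k+1)-1),\varphi_2(2q(k+1)-1)\}$; I must show that $|c_{n+r}-c_n|\le \frac1{k+1}$ for all $r\in\N$ (I write $r$ for the increment, to avoid a clash with the coefficient $p$). Since $c_{n+r}-c_n=p(a_{n+r}-a_n)+q(b_{n+r}-b_n)$, the triangle inequality immediately gives
\[
|c_{n+r}-c_n|\le p\,|a_{n+r}-a_n|+q\,|b_{n+r}-b_n|.
\]

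Next I would feed the two hypotheses with the right arguments. Because $n\ge\varphi_1(2p(k+1)-1)$, the defining property of a Cauchy modulus yields $|a_{n+r}-a_n|\le \frac1{(2p(k+1)-1)+1}=\frac1{2p(k+1)}$, and similarly $n\ge\varphi_2(2q(k+1)-1)$ gives $|b_{n+r}-b_n|\le\frac1{2q(k+1)}$. Substituting into the displayed estimate and simplifying,
\[
|c_{n+r}-c_n|\le p\cdot\frac1{2p(k+1)}+q\cdot\frac1{2q(k+1)}=\frac1{2(k+1)}+\frac1{2(k+1)}=\frac1{k+1},
\]
which is exactly what is required; since $k$ and $r$ were arbitrary, $\varphi$ is a Cauchy modulus of $(c_n)$.

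The only point that needs a word of care is the degenerate case $p=0$ (or $q=0$), where the argument $2p(k+1)-1$ passed to $\varphi_1$ is negative; in that case the term $p\,|a_{n+r}-a_n|$ is simply $0$ and may be dropped from the estimate, so one only needs $n\ge\varphi_2(2q(k+1)-1)$, with $\varphi_1$ read off by a harmless convention that does not affect the max (and if, as in the applications here, $p,q\ge 1$, the issue does not arise at all). Beyond this bookkeeping there is no real obstacle: the statement is just the quantitative analogue of ``a linear combination of Cauchy sequences is Cauchy,'' and all the content lies in choosing the arguments $2p(k+1)-1$ and $2q(k+1)-1$ so that each of the two error contributions comes out at most $\frac1{2(k+1)}$.
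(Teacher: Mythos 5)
Your proof is correct and follows essentially the same route as the paper's: split $c_{n+r}-c_n$ linearly, apply each Cauchy modulus at the arguments $2p(k+1)-1$ and $2q(k+1)-1$, and add the two contributions of $\tfrac1{2(k+1)}$. Your extra care (renaming the shift index to avoid the clash with the coefficient $p$, using absolute values, and flagging the degenerate case $p=0$ or $q=0$) only tightens the write-up; the paper's proof glosses over these points but is otherwise identical.
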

\begin{proof}
Let $k\in\N$.  We get that for all $n\geq \varphi(k)$ and all $p\in \N$,  
\begin{align*}
c_{n+p} - c_n &=  p(a_{n+p}-a_n) + q (b_{n+p}-b_n) \leq p\cdot\frac1{2p(k+1)} + q\cdot\frac1{2q(k+1)}=\frac1{k+1}.
\end{align*} 
\end{proof}

The following result is \cite[Proposition 2.7]{LeuPin24}, which is a reformulation of \cite[Lemma 2.9(1)]{DinPin23}, 
obtained by taking $\displaystyle \frac1{k+1}$ instead of $\varepsilon$. 
It is a quantitative version of a particular case of a very useful lemma on 
sequences of real numbers due to Xu \cite{Xu02}.

\begin{proposition}\label{quant-lem-Xu02-bn-0}
Let $(a_n)$ be a sequence in $[0,1]$ and 
$(c_n), (s_n)$ sequences of nonnegative real numbers  such that for all $n\in\N$,  
\begin{equation}\label{def-sn-an-cn}
s_{n+1}\leq (1-a_n)s_n + c_n. 
\end{equation}
Assume that $L\in\N^*$ is an upper bound on $(s_n)$, $\sum\limits_{n=0}^\infty a_n$  diverges with rate of 
divergence $\theta$, and $\sum\limits_{n=0}^\infty c_n$ converges with Cauchy modulus $\chi$.

Then $\lim\limits_{n\to\infty} s_n=0$ with rate of convergence $\Sigma$ defined by 
\begin{align*}
\Sigma(k) &=\theta\left(\chi(2k+1)+1+\lceil \ln(2L(k+1))\rceil\right)+1.
\end{align*} 
\end{proposition}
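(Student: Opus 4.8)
The plan is to unfold the recursion \eqref{def-sn-an-cn} down to a carefully chosen starting index $N$, split the resulting bound into one part coming from $s_N$ and one part coming from the $c_i$, and estimate each separately: the product $\prod(1-a_i)$ will be pushed below a prescribed threshold via $1-t\le e^{-t}$ and the rate of divergence $\theta$ of $\sum a_n$, while the tail $\sum c_i$ will be controlled by the Cauchy modulus $\chi$ of $\sum c_n$.

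Concretely, I would fix $k\in\N$, set $N=\chi(2k+1)+1$ and $m=\chi(2k+1)+1+\lceil\ln(2L(k+1))\rceil$, and take an arbitrary $n\ge\theta(m)$. Since $\theta$ is a rate of divergence and partial sums of nonnegative terms are nondecreasing, $\sum_{i=0}^{n}a_i\ge\sum_{i=0}^{\theta(m)}a_i\ge m$; combined with $\sum_{i=0}^{n}a_i\le n+1$ (as $(a_i)\subseteq[0,1]$) this gives $n\ge m-1\ge N$, using $\lceil\ln(2L(k+1))\rceil\ge 1$. Hence iterating \eqref{def-sn-an-cn} from $N$ up to $n$ is legitimate and produces
$$s_{n+1}\le\Bigl(\prod_{i=N}^{n}(1-a_i)\Bigr)s_N+\sum_{i=N}^{n}\Bigl(\prod_{j=i+1}^{n}(1-a_j)\Bigr)c_i\le\Bigl(\prod_{i=N}^{n}(1-a_i)\Bigr)L+\sum_{i=N}^{n}c_i,$$
where each subproduct is $\le 1$ because every factor $1-a_j$ lies in $[0,1]$, and $s_N\le L$. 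Then $\sum_{i=N}^{n}a_i=\sum_{i=0}^{n}a_i-\sum_{i=0}^{N-1}a_i\ge m-N=\lceil\ln(2L(k+1))\rceil\ge\ln(2L(k+1))$ (bounding $\sum_{i=0}^{N-1}a_i\le N$), so from $1-a_i\le e^{-a_i}$ we get
$$\prod_{i=N}^{n}(1-a_i)\le\exp\Bigl(-\sum_{i=N}^{n}a_i\Bigr)\le\exp\bigl(-\ln(2L(k+1))\bigr)=\frac1{2L(k+1)},$$
so the first summand is at most $\frac1{2(k+1)}$. For the second summand, $N-1=\chi(2k+1)$ and the Cauchy modulus property (applied with $2k+1$ in place of $k$) give $\sum_{i=N}^{n}c_i=\tilde{c}_n-\tilde{c}_{N-1}\le\frac1{2(k+1)}$. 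Adding the two estimates, $s_{n+1}\le\frac1{k+1}$ whenever $n\ge\theta(m)$, that is, $s_n\le\frac1{k+1}$ for all $n\ge\theta(m)+1=\Sigma(k)$, which is the claim.

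I do not expect a genuine obstacle; the difficulty is purely the quantitative bookkeeping. The key design choice is the budget $m$: after discarding the initial block $\sum_{i=0}^{N-1}a_i$, whose size can only be bounded crudely by $N=\chi(2k+1)+1$, the remaining mass of $\sum a_i$ must still exceed $\ln(2L(k+1))$, which is exactly why $m$ equals $\chi(2k+1)+1$ plus $\lceil\ln(2L(k+1))\rceil$. The factors of $2$ in $\chi(2k+1)$ and in $2L(k+1)$ reflect splitting the target error $\frac1{k+1}$ evenly between the two summands, and one must verify along the way that $n\ge\theta(m)$ already forces $n\ge N$, so that the telescoped inequality is available in the first place.
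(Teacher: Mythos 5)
Your proof is correct, and it is essentially the standard argument: the paper does not prove this proposition itself but cites \cite[Proposition 2.7]{LeuPin24} (a reformulation of \cite[Lemma 2.9(1)]{DinPin23}), whose proof is the same unfolding of the recursion from a cut-off index, with the product controlled via $1-t\le e^{-t}$ and the rate of divergence, and the tail controlled by the Cauchy modulus. Your bookkeeping (in particular the check that $n\ge\theta(m)$ forces $n\ge N$, and the even split of $\frac1{k+1}$ between the two summands) matches the quoted rate exactly.
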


The next lemma is a slight variation of \cite[Lemma~3]{SabSht17}, proved in \cite[Lemma~2.8]{LeuPin24}. 

\begin{lemma}\label{lem:sabach-shtern-v2}
Let $L > 0$, $J \geq N \geq 2$, $\gamma \in (0, 1]$, $(c_n)$ be a sequence bounded above by $L$, and 
$a_n = \frac{N}{\gamma(n + J)}$ for all $n\in\N$. Suppose that $(s_n)$ is a sequence of nonnegative 
real numbers such that $s_0 \leq L$ and, for all $n \in \N$,
\begin{equation*}
s_{n + 1} \leq (1 - \gamma a_{n + 1}) s_n + (a_n - a_{n + 1}) c_n. 
\end{equation*}
Then, for all $n \in \N$,
\begin{equation*}
s_n \leq \frac{J L}{\gamma(n + J)}.
\end{equation*}
\end{lemma}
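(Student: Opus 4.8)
The plan is to establish $s_n \le \dfrac{JL}{\gamma(n+J)}$ by a straightforward induction on $n$, exploiting the explicit form of the coefficients $a_n$ directly rather than invoking Proposition~\ref{quant-lem-Xu02-bn-0} (which is tailored to general convergent perturbations and would not yield the sharp linear bound claimed here). The base case is immediate: $\dfrac{JL}{\gamma(0+J)} = \dfrac{L}{\gamma}$, and since $\gamma \in (0,1]$ we have $\dfrac{L}{\gamma} \ge L \ge s_0$.

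For the inductive step, assume $s_n \le \dfrac{JL}{\gamma(n+J)}$. First I would record the two sign facts that make the recursion usable. On the one hand, $\gamma a_{n+1} = \dfrac{N}{n+1+J}$, and since $J \ge N$ we get $n+1+J \ge N+1 > N$, hence $1 - \gamma a_{n+1} \ge 0$. On the other hand, $a_n - a_{n+1} = \dfrac{N}{\gamma}\left(\dfrac{1}{n+J} - \dfrac{1}{n+1+J}\right) = \dfrac{N}{\gamma(n+J)(n+1+J)} \ge 0$. Combining the latter with $c_n \le L$, and the induction hypothesis with $1 - \gamma a_{n+1} \ge 0$ and $s_n \ge 0$, the recursion $s_{n+1} \le (1 - \gamma a_{n+1})s_n + (a_n - a_{n+1})c_n$ gives
\[
s_{n+1} \le \left(1 - \frac{N}{n+1+J}\right)\frac{JL}{\gamma(n+J)} + \frac{NL}{\gamma(n+J)(n+1+J)} = \frac{L\bigl(J(n+1+J-N) + N\bigr)}{\gamma(n+J)(n+1+J)}.
\]
It then remains to verify $J(n+1+J-N) + N \le J(n+J)$, which after cancelling $J(n+J)$ reduces to $N \le J(N-1)$; this holds because $N \ge 2$ and $J \ge N$ yield $J(N-1) \ge N(N-1) \ge N$. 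Dividing the displayed bound through by $n+J$ then gives exactly $s_{n+1} \le \dfrac{JL}{\gamma(n+1+J)}$, which closes the induction.

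I do not expect a genuine obstacle here beyond bookkeeping. The only mildly delicate point is ensuring that the sign conditions $1 - \gamma a_{n+1} \ge 0$ and $a_n - a_{n+1} \ge 0$ hold for \emph{every} $n$, so that the induction hypothesis and the upper bound $c_n \le L$ may be substituted in the correct direction; both follow at once from the standing assumptions $J \ge N \ge 2$ and $\gamma \in (0,1]$, and the same assumptions are precisely what forces the final numerical inequality $N \le J(N-1)$.
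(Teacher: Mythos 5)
Your proof is correct, and it is essentially the standard argument: the paper does not prove this lemma itself but cites \cite[Lemma~2.8]{LeuPin24} (a variant of \cite[Lemma~3]{SabSht17}), whose proof is the same induction on $n$ using the explicit form of $a_n$, the monotonicity $a_n\ge a_{n+1}$, and the reduction to $N\le J(N-1)$. Nothing further to add.
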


\section{VAM with errors for resolvents of $m$-accretive operators in Banach spaces}\label{VAMe-results}

Let $X$ be a normed space, $A:X\to 2^X$ be an $m$-accretive operator such that $\mathrm{zer} A\ne\emptyset$,  
and $f:X\to X$ be an $\alpha$-contraction for $\alpha\in[0,1)$, 
that is $\|f(x)-f(y)\|\leq \alpha \|x-y\|$ for all $x,y\in X$. 

We consider the iteration $(x_n)$ defined as follows: 
\begin{equation}\label{def-VAMe}
\mathrm{VAMe} \qquad x_0=x\in X, \quad x_{n+1}=\alpha_n f(x_n) +(1-\alpha_n)J_{\lambda_n}^Ax_n + e_n,
\end{equation}
where $(\alpha_n)_{n\in\N}$ is a sequence in $[0,1]$,  $(\lambda_n)_{n\in\N}$ is a sequence in $(0,\infty)$, and $(e_n)_{n\in\N}$ is 
a sequence in $X$. Hence, $(x_n)$ is obtained from the VAM iteration studied in \cite{XuAltImtSou22} 
by adding error terms $e_n$. 

For every $z\in \mathrm{zer}A$, let $(K_{z,n})_{n\in\N}$ be a sequence of real numbers defined as follows:
\begin{equation}\label{def-Kzn}
K_{z,0}=\max\left\{\|x-z\|,\frac{\|f(z)-z\|}{1-\alpha}\right\}, \quad K_{z,n}=K_{z,0}+\sum_{i=0}^{n-1}\|e_i\| \text{~~for all~}n\geq 1.
\end{equation}
Thus, $K_{z,n+1}=K_{z,n}+\|e_n\|$ for all $n\geq 0$. 

\begin{lemma}\label{xn-bound-as-reg}
For all $z\in \mathrm{zer}A$ and $m,n\in\N$, 
\begin{enumerate}
\item\label{xnz-fxnz-bound-Kzn} $\|x_n-z\|,\|f(x_n)-z\|\le K_{z,n}$; 
\item\label{xn+xn-bound-2Kzn+1} $\|x_{n+1}-x_n\|\leq 2K_{z,n+1}$;
\item\label{Jlambdamxnz-bound-Kzn} $\left\|J^A_{\lambda_m}x_n-z \right\|\le K_{z,n}$;
\item\label{Jlambdamxnxnfxnz-bound-2Kzn} $\left\|J^A_{\lambda_m}x_n-x_n\right\|, \left\|J^A_{\lambda_m}x_n-f(x_n)\right\| \leq 2K_{z,n}$.
\end{enumerate}
\end{lemma}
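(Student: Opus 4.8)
The plan is to prove part \eqref{xnz-fxnz-bound-Kzn} first, by induction on $n$, and then to obtain the remaining three items from it via elementary triangle-inequality estimates. Throughout I will use two facts: that $(K_{z,n})_{n\in\N}$ is nondecreasing (indeed $K_{z,n+1}=K_{z,n}+\|e_n\|\ge K_{z,n}$, so $K_{z,0}\le K_{z,n}$ for all $n$), and that for every $\gamma>0$ the resolvent $J^A_\gamma$ is nonexpansive with $\mathrm{Fix}(J^A_\gamma)=\mathrm{zer}A$.

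Fix $z\in\mathrm{zer}A$. First I would establish $\|x_n-z\|\le K_{z,n}$ by induction on $n$; the bound $\|f(x_n)-z\|\le K_{z,n}$ then falls out of the induction step. The base case is immediate, since $\|x_0-z\|=\|x-z\|\le K_{z,0}$ by \eqref{def-Kzn}. For the induction step, assume $\|x_n-z\|\le K_{z,n}$. Since $f$ is an $\alpha$-contraction, $\|f(x_n)-z\|\le\|f(x_n)-f(z)\|+\|f(z)-z\|\le\alpha\|x_n-z\|+\|f(z)-z\|$, and here one uses precisely the second term in the definition of $K_{z,0}$, namely $\|f(z)-z\|\le(1-\alpha)K_{z,0}\le(1-\alpha)K_{z,n}$, to get $\|f(x_n)-z\|\le\alpha K_{z,n}+(1-\alpha)K_{z,n}=K_{z,n}$. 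Moreover, as $z$ is a fixed point of the nonexpansive map $J^A_{\lambda_n}$, we have $\|J^A_{\lambda_n}x_n-z\|=\|J^A_{\lambda_n}x_n-J^A_{\lambda_n}z\|\le\|x_n-z\|\le K_{z,n}$. Plugging these two estimates into the defining recursion \eqref{def-VAMe} and using $\|e_n\|=K_{z,n+1}-K_{z,n}$ yields $\|x_{n+1}-z\|\le\alpha_n\|f(x_n)-z\|+(1-\alpha_n)\|J^A_{\lambda_n}x_n-z\|+\|e_n\|\le\alpha_n K_{z,n}+(1-\alpha_n)K_{z,n}+\|e_n\|=K_{z,n+1}$, which closes the induction.

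Given \eqref{xnz-fxnz-bound-Kzn}, the remaining parts are routine. For \eqref{Jlambdamxnz-bound-Kzn}, the same nonexpansiveness argument applied to $J^A_{\lambda_m}$ for an arbitrary index $m$ (not necessarily equal to $n$) gives $\|J^A_{\lambda_m}x_n-z\|\le\|x_n-z\|\le K_{z,n}$. For \eqref{xn+xn-bound-2Kzn+1}, $\|x_{n+1}-x_n\|\le\|x_{n+1}-z\|+\|x_n-z\|\le K_{z,n+1}+K_{z,n}\le 2K_{z,n+1}$, using monotonicity of $(K_{z,n})$. For \eqref{Jlambdamxnxnfxnz-bound-2Kzn}, combining \eqref{xnz-fxnz-bound-Kzn} and \eqref{Jlambdamxnz-bound-Kzn} gives $\|J^A_{\lambda_m}x_n-x_n\|\le\|J^A_{\lambda_m}x_n-z\|+\|z-x_n\|\le 2K_{z,n}$ and likewise $\|J^A_{\lambda_m}x_n-f(x_n)\|\le\|J^A_{\lambda_m}x_n-z\|+\|z-f(x_n)\|\le 2K_{z,n}$. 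I do not expect a genuine obstacle: the only delicate point is the bound on $\|f(x_n)-z\|$ in the induction step, where the choice $K_{z,0}\ge\frac{\|f(z)-z\|}{1-\alpha}$ is exactly what absorbs the additive constant $\|f(z)-z\|$, together with the repeated use of $K_{z,n}\le K_{z,n+1}$.
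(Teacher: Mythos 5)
Your proof is correct and follows essentially the same route as the paper: induction for part (i) using the nonexpansiveness of $J^A_{\lambda_n}$, the contraction property of $f$, the choice $K_{z,0}\ge\frac{\|f(z)-z\|}{1-\alpha}$, and the recursion $K_{z,n+1}=K_{z,n}+\|e_n\|$, followed by triangle inequalities for parts (ii)--(iv). The only cosmetic difference is that the paper runs the induction simultaneously on both inequalities of part (i), whereas you induct on $\|x_n-z\|\le K_{z,n}$ alone and deduce the bound on $\|f(x_n)-z\|$ from it inside the step; the two organizations are equivalent.
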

\begin{proof}
\begin{enumerate}
\item We prove the two inequalities simultaneously by induction on $n$.

$n=0$: $\|x_0-z\|\leq  K_{z,0}$ follows by \eqref{def-Kzn}. Furthermore, applying the fact that 
$f$ is an $\alpha$-contraction and \eqref{def-Kzn}, we get that 
$$\|f(x_0)-z\|  \le \|f(x_0)-f(z)\|+\|f(z)-z\| \le \alpha \|x_0-z\|+ (1-\alpha)K_{z,0} \le K_{z,0}.$$

$n\Rightarrow n+1$: We have that 
\begin{align*}
\|x_{n+1}-z\|&=\left\|\alpha_n (f(x_n)-z) +(1-\alpha_n)(J_{\lambda_n}^Ax_n-J_{\lambda_n}^Az)+e_n \right\|
 \quad \text{~~as~} J_{\lambda_n}^Az=z\\
&\le \alpha_n \|f(x_n)-z\|+(1-\alpha_n)\|x_n-z\|+\|e_n\| \quad \text{~~as~} J_{\lambda_n}^A \text{~is nonexpansive}\\
&\le K_{z,n}+\|e_n\| \quad \text{~~by the induction hypothesis}\\
& =  K_{z,n+1}.
\end{align*}
Moreover, $\|f(x_{n+1})-z\|\leq \alpha\|x_{n+1}-z\|+\|f(z)-z\| \le K_{z,n+1}$.
\item $\|x_{n+1}-x_n\| \le \|x_n-z\|+\|x_{n+1}-z\| \le K_{z,n} + K_{z,n+1} \le 2K_{z,n+1}$.  
\item $\left\|J^A_{\lambda_m}x_n-z\right\| = \left\|J^A_{\lambda_m}x_n-J^A_{\lambda_m}z\right\| \le \|x_n-z\| \le K_{z,n}$.
\item $\left\|J^A_{\lambda_m}x_n-x_n\right\| \le \left\|J^A_{\lambda_m}x_n-z\right\| + \|x_n-z\| \le 2K_{z,n}$ and 
$\left\|J^A_{\lambda_m}x_n-f(x_n)\right\|  \le \left\|J^A_{\lambda_m}x_n-z\right\| + \|f(x_n)-z\| \le 2K_{z,n}$.
\end{enumerate}
\end{proof}

The following is the main inequality that will be used in the proof of one of our main results from 
Section~\ref{section-main-results}. 

\begin{proposition}
For all $n\in\N$,
\begin{align}
\|x_{n+2}-x_{n+1}\| & \le  (1-(1-\alpha)\alpha_{n+1})\|x_{n+1}-x_n\| + M_{z,n} + \left\|e_{n+1}-e_n\right\|, \label{main-ineq-VAMe-lambdaone}\\
\|x_{n+2}-x_{n+1}\| & \le  (1-(1-\alpha)\alpha_{n+1})\|x_{n+1}-x_n\| + M^*_{z,n} + \left\|e_{n+1}-e_n\right\|, \label{main-ineq-VAMe-lambdastarone}
\end{align}
where 
\begin{align*}
M_{z,n} &= 2K_{z,n}\left(|\alpha_{n+1}-\alpha_n|
+ (1-\alpha_{n+1})\left| 1-\frac{\lambda_{n+1}}{\lambda_n} \right|\right),\\
M^*_{z,n} &= 2K_{z,n}\left(|\alpha_{n+1}-\alpha_n|
+ (1-\alpha_{n+1})\left| 1-\frac{\lambda_n}{\lambda_{n+1}} \right|\right).
\end{align*}
\end{proposition}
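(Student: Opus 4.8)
The plan is to estimate $\|x_{n+2}-x_{n+1}\|$ directly from the VAMe recursion \eqref{def-VAMe}, writing both $x_{n+2}$ and $x_{n+1}$ in their defining form and grouping the terms so that the contraction $f$ and the nonexpansiveness of the resolvents can be exploited. First I would expand
\[
x_{n+2}-x_{n+1} = \alpha_{n+1}\bigl(f(x_{n+1})-f(x_n)\bigr) + (\alpha_{n+1}-\alpha_n)\bigl(f(x_n)-J^A_{\lambda_n}x_n\bigr) + (1-\alpha_{n+1})\bigl(J^A_{\lambda_{n+1}}x_{n+1}-J^A_{\lambda_n}x_n\bigr) + (e_{n+1}-e_n),
\]
which is the natural ``telescoping'' regrouping: the first bracket is controlled by the contraction constant $\alpha$, the second by the difference of the step parameters $\alpha_n$, and the third is the genuinely new piece coming from the fact that the resolvents change with $n$.

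Next I would handle the third bracket by inserting the intermediate term $J^A_{\lambda_{n+1}}x_n$, so that
\[
\|J^A_{\lambda_{n+1}}x_{n+1}-J^A_{\lambda_n}x_n\| \le \|J^A_{\lambda_{n+1}}x_{n+1}-J^A_{\lambda_{n+1}}x_n\| + \|J^A_{\lambda_{n+1}}x_n-J^A_{\lambda_n}x_n\| \le \|x_{n+1}-x_n\| + \left|1-\frac{\lambda_{n+1}}{\lambda_n}\right|\,\|J^A_{\lambda_n}x_n-x_n\|,
\]
where the first estimate uses nonexpansiveness of $J^A_{\lambda_{n+1}}$ and the second is exactly \eqref{J-ineq} from the lemma in the preliminaries (valid for all $x\in X$ since $A$ is $m$-accretive). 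For \eqref{main-ineq-VAMe-lambdastarone} I would instead insert $J^A_{\lambda_n}x_{n+1}$ and apply \eqref{J-ineq} in the other direction, producing the factor $\left|1-\frac{\lambda_n}{\lambda_{n+1}}\right|$; this is the only structural difference between the two inequalities. Then I would bound $\|f(x_{n+1})-f(x_n)\| \le \alpha\|x_{n+1}-x_n\|$, and bound the remaining ``constant-size'' quantities $\|f(x_n)-J^A_{\lambda_n}x_n\|$ and $\|J^A_{\lambda_n}x_n-x_n\|$ each by $2K_{z,n}$ using Lemma~\ref{xn-bound-as-reg}\eqref{Jlambdamxnxnfxnz-bound-2Kzn}.

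Collecting the coefficients of $\|x_{n+1}-x_n\|$ gives $\alpha\alpha_{n+1} + (1-\alpha_{n+1}) = 1 - (1-\alpha)\alpha_{n+1}$, which is precisely the contractive factor in the claimed inequality, while the leftover terms combine into $2K_{z,n}\bigl(|\alpha_{n+1}-\alpha_n| + (1-\alpha_{n+1})|1-\lambda_{n+1}/\lambda_n|\bigr) = M_{z,n}$ (respectively $M^*_{z,n}$), plus the error term $\|e_{n+1}-e_n\|$. I expect no serious obstacle here; the only point requiring care is the bookkeeping in the regrouping of $x_{n+2}-x_{n+1}$ — one must choose the intermediate resolvent term so that the $\|x_{n+1}-x_n\|$ contribution is absorbed cleanly and the factor $(1-\alpha_{n+1})$ lands in front of the $\lambda$-difference rather than being lost — and the fact that $\alpha_{n+1}-\alpha_n$ may be negative, so the triangle inequality must be applied with $|\alpha_{n+1}-\alpha_n|$ throughout.
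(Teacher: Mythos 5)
Your decomposition of $x_{n+2}-x_{n+1}$ and your treatment of \eqref{main-ineq-VAMe-lambdaone} coincide with the paper's proof: same regrouping, same intermediate point $J^A_{\lambda_{n+1}}x_n$, same use of \eqref{J-ineq} and of Lemma~\ref{xn-bound-as-reg}.\eqref{Jlambdamxnxnfxnz-bound-2Kzn}, and the coefficient bookkeeping $\alpha\alpha_{n+1}+(1-\alpha_{n+1})=1-(1-\alpha)\alpha_{n+1}$ is right.

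For \eqref{main-ineq-VAMe-lambdastarone}, however, your proposed detour through $J^A_{\lambda_n}x_{n+1}$ does not quite deliver the stated constant. With that intermediate point the resolvent-difference you must estimate is $\|J^A_{\lambda_{n+1}}x_{n+1}-J^A_{\lambda_n}x_{n+1}\|$, i.e.\ two resolvents evaluated at $x_{n+1}$, and \eqref{J-ineq} then bounds it by $\left|1-\frac{\lambda_n}{\lambda_{n+1}}\right|\|J^A_{\lambda_{n+1}}x_{n+1}-x_{n+1}\|$, which Lemma~\ref{xn-bound-as-reg}.\eqref{Jlambdamxnxnfxnz-bound-2Kzn} controls only by $2K_{z,n+1}=2K_{z,n}+2\|e_n\|$, not by $2K_{z,n}$. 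So you would prove \eqref{main-ineq-VAMe-lambdastarone} with $K_{z,n+1}$ in place of $K_{z,n}$ — a strictly weaker inequality whenever $e_n\neq 0$ (harmless for the later applications, where everything is dominated by $K_z$, but not the statement as written). The paper avoids this by keeping the \emph{same} intermediate point $J^A_{\lambda_{n+1}}x_n$ for both inequalities and exploiting that \eqref{J-ineq} applies to the single difference $\|J^A_{\lambda_{n+1}}x_n-J^A_{\lambda_n}x_n\|$ with the roles of the two parameters interchanged, giving both $\left|1-\frac{\lambda_{n+1}}{\lambda_n}\right|\|J^A_{\lambda_n}x_n-x_n\|$ and $\left|1-\frac{\lambda_n}{\lambda_{n+1}}\right|\|J^A_{\lambda_{n+1}}x_n-x_n\|$, each $\le 2K_{z,n}$ since both are resolvent defects at the point $x_n$.
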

\begin{proof} We have  that 
\begin{align*}
x_{n+2}-x_{n+1} &= \left(\alpha_{n+1}f(x_{n+1})+(1-\alpha_{n+1})J^A_{\lambda_{n+1}}x_{n+1}\right)-
\left(\alpha_{n+1}f(x_n)+(1-\alpha_{n+1})J^A_{\lambda_n}x_n\right)\\
&  \quad + \left(\alpha_{n+1}f(x_n)+(1-\alpha_{n+1})J^A_{\lambda_n}x_n\right) - 
\left(\alpha_nf(x_n)+(1-\alpha_n)J^A_{\lambda_n}x_n\right)+e_{n+1}-e_n\\
&= \alpha_{n+1}\left(f(x_{n+1})-f(x_n)\right) + (1-\alpha_{n+1})\left(J^A_{\lambda_{n+1}}x_{n+1}-J^A_{\lambda_n}x_n\right)\\
& \quad  + \left(\alpha_{n+1}-\alpha_n\right)f(x_n) + (\alpha_n-\alpha_{n+1})J^A_{\lambda_n}x_n+e_{n+1}-e_n\\
&= \alpha_{n+1}\left(f(x_{n+1})-f(x_n)\right) + (1-\alpha_{n+1})\left(J^A_{\lambda_{n+1}}x_{n+1}-J^A_{\lambda_n}x_n\right)\\
& \quad  + \left(\alpha_{n+1}-\alpha_n\right)\left(f(x_n)-J^A_{\lambda_n}x_n\right)+e_{n+1}-e_n.
\end{align*}
Thus, 
\begin{align*}
\|x_{n+2}-x_{n+1}\| &\le  \alpha_{n+1}\alpha\|x_{n+1}-x_n\| + (1-\alpha_{n+1})\left\|J^A_{\lambda_{n+1}}x_{n+1}-J^A_{\lambda_n}x_n\right\| \\
&  \quad  + \left|\alpha_{n+1}-\alpha_n\right|\left\|f(x_n)-J^A_{\lambda_n}x_n\right\| + \|e_{n+1}-e_n\|\\
 &\le  \alpha_{n+1}\alpha\|x_{n+1}-x_n\| + (1-\alpha_{n+1})\left\|J^A_{\lambda_{n+1}}x_{n+1}-J^A_{\lambda_n}x_n\right\|\\
 & \quad  + 2K_{z,n}\left|\alpha_{n+1}-\alpha_n\right| + \|e_{n+1}-e_n\|  
 \quad  \text{by Lemma~\ref{xn-bound-as-reg}.\eqref{Jlambdamxnxnfxnz-bound-2Kzn}}.
\end{align*}
As
\begin{align*}
\left\|J^A_{\lambda_{n+1}}x_{n+1}-J^A_{\lambda_n}x_n\right\| & \le 
\left\|J^A_{\lambda_{n+1}}x_{n+1}-J^A_{\lambda_{n+1}}x_n\right\| + \left\|J^A_{\lambda_{n+1}}x_n-J^A_{\lambda_n}x_n \right\|\\
& \le  \left\|x_{n+1}-x_n\right\| + \left\|J^A_{\lambda_{n+1}}x_n-J^A_{\lambda_n}x_n \right\|,
\end{align*}
it follows that 
\begin{align*}
\|x_{n+2}-x_{n+1}\| &\le  \left(\alpha_{n+1}\alpha+1-\alpha_{n+1}\right)\|x_{n+1}-x_n\| + 
(1-\alpha_{n+1})\left\|J^A_{\lambda_{n+1}}x_n-J^A_{\lambda_n}x_n \right\| \\
&  \quad + 2K_{z,n}\left|\alpha_{n+1}-\alpha_n\right| + \|e_{n+1}-e_n\|.
\end{align*}

By \eqref{J-ineq} and Lemma~\ref{xn-bound-as-reg}.\eqref{Jlambdamxnxnfxnz-bound-2Kzn} we have that 
\begin{align}
\left\|J^A_{\lambda_{n+1}}x_n-J^A_{\lambda_n}x_n\right\| &\le   \left|1-\frac{\lambda_{n+1}}{\lambda_n}\right| \left\|J^A_{\lambda_n} x_n-x_n \right\| 
\le 2K_{z,n}\left|1-\frac{\lambda_{n+1}}{\lambda_n}\right|, \label{useful-ineq-Mzn}\\
\left\|J^A_{\lambda_{n+1}}x_n-J^A_{\lambda_n}x_n\right\| &\le   \left|1-\frac{\lambda_n}{\lambda_{n+1}}\right| \left\|J^A_{\lambda_{n+1}} x_n-x_n \right\| 
\le 2K_{z,n}\left|1-\frac{\lambda_n}{\lambda_{n+1}}\right|. \label{useful-ineq-Mstarzn}
\end{align}
Apply \eqref{useful-ineq-Mzn} and  \eqref{useful-ineq-Mstarzn} to conclude that \eqref{main-ineq-VAMe-lambdaone} and \eqref{main-ineq-VAMe-lambdastarone}
hold.
\end{proof}

\subsection{Quantitative hypotheses on the parameter sequences}

We consider the following hypotheses on the parameter sequences $(\alpha_n)$, $(\lambda_n)$, $(e_n)$ 
from the definition \eqref{def-VAMe} of the VAMe iteration $(x_n)$:
\begin{align*}
\honealpha & \quad \sum\limits_{n=0}^{\infty} \alpha_n =\infty \text{~with divergence rate~} \sigmaonealpha;\\
\htwoalpha & \quad \sum\limits_{n=0}^{\infty} |\alpha_n-\alpha_{n+1}| <\infty \text{~with Cauchy modulus~} \sigmatwoalpha;\\
\hthreealpha & \quad \lim\limits_{n\to\infty}\alpha_n=0 \text{~with rate of convergence~} \sigmathreealpha;\\
\honelambda & \quad \sum\limits_{n=0}^{\infty} \left|1-\frac{\lambda_{n+1}}{\lambda_n}\right|<\infty 
\text{~with Cauchy modulus~} \sigmaonelambda;\\
\honestarlambda & \quad \sum\limits_{n=0}^{\infty} \left|1-\frac{\lambda_n}{\lambda_{n+1}}\right|<\infty \text{~with Cauchy modulus~} 
\sigmaonestarlambda;\\
\htwolambda & \quad \Lambda\in\N^* \text{~and~} N_{\Lambda}\in\N \text{~are such that~} \lambda_n \geq 
\frac1\Lambda \text{~for all~} n\geq N_{\Lambda}; \\
\hthreelambda & \quad \sum\limits_{n=0}^{\infty} |\lambda_n-\lambda_{n+1}| <\infty \text{~with Cauchy modulus~} \sigmathreelambda; \\
\honeen & \quad \sum\limits_{n=0}^{\infty} \|e_n\| <\infty \text{~with Cauchy modulus~} \sigmaoneen; \\
\htwoen & \quad \lim\limits_{n\to\infty}\|e_n\|=0 \text{~with rate of convergence~} \sigmatwoen; \\
\hthreeen & \quad  \sigmathreeen\in \N^* \text{~is an upper bound on~} \sum\limits_{n=0}^{\infty} \|e_n\|.
\end{align*}

\begin{lemma}\label{honeen-htwoen-htwoen-Kz}$\,$
\begin{enumerate}
\item\label{hthreeen-Kz} Assume that $\hthreeen$ holds. For every $z\in \mathrm{zer}A$, let $K_z\in \N^*$ be such that 
\begin{equation*}
K_z\geq \max\left\{\|x-z\|,\frac{\|f(z)-z\|}{1-\alpha}\right\}+\sigmathreeen.
\end{equation*}
Then $K_{z,n}\leq K_z$ for all $n\in \N$. 
Hence, Lemma~\ref{xn-bound-as-reg} and inequalities \eqref{main-ineq-VAMe-lambdaone}, \eqref{main-ineq-VAMe-lambdastarone} 
hold with $K_z$ instead of $K_{z,n}$ or $K_{z,n+1}$. 
\item\label{honeen-htwoen-hthreen} Suppose that $\honeen$ holds. Then $\htwoen$ is satisfied with $\sigmatwoen(k)=\sigmaoneen(k)+1$ and  
$\hthreeen$ is satisfied with $\sigmathreeen=\left\lceil\sum\limits_{i=0}^{\sigmaoneen(0)}\|e_i\|\right\rceil+1$. 
\end{enumerate}
\end{lemma}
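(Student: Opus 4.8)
The plan is to prove the two parts of Lemma~\ref{honeen-htwoen-htwoen-Kz} separately, each being a short computation based directly on the definitions of the quantitative hypotheses and of $K_{z,n}$ from \eqref{def-Kzn}.

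For part \eqref{hthreeen-Kz}: recall that $K_{z,0}=\max\left\{\|x-z\|,\frac{\|f(z)-z\|}{1-\alpha}\right\}$ and $K_{z,n}=K_{z,0}+\sum_{i=0}^{n-1}\|e_i\|$ for $n\geq 1$. Since $\hthreeen$ says $\sigmathreeen$ is an upper bound on the full series $\sum_{n=0}^\infty\|e_n\|$, the partial sums satisfy $\sum_{i=0}^{n-1}\|e_i\|\leq \sigmathreeen$ for every $n$, hence $K_{z,n}\leq K_{z,0}+\sigmathreeen\leq K_z$ for all $n$ (the case $n=0$ being immediate since $K_{z,0}\le K_z$). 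Because all the inequalities in Lemma~\ref{xn-bound-as-reg} and in \eqref{main-ineq-VAMe-lambdaone}, \eqref{main-ineq-VAMe-lambdastarone} have $K_{z,n}$ (or $K_{z,n+1}$) on the right-hand side only, replacing that quantity by the larger constant $K_z$ preserves their validity; this gives the ``Hence'' clause.

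For part \eqref{honeen-htwoen-hthreen}: assume $\honeen$, i.e.\ $\sigmaoneen$ is a Cauchy modulus of the sequence of partial sums $\tilde b_n=\sum_{i=0}^n\|e_i\|$. To get $\htwoen$, note that for $n\geq \sigmaoneen(k)+1$ we have $\|e_n\| = \tilde b_n - \tilde b_{n-1} \le \frac1{k+1}$ by applying the Cauchy modulus at index $n-1\ge\sigmaoneen(k)$ with $p=1$; hence $\sigmatwoen(k)=\sigmaoneen(k)+1$ works. To get $\hthreeen$, observe that for every $n$, $\tilde b_n \le \tilde b_{\sigmaoneen(0)} + (\tilde b_n - \tilde b_{\sigmaoneen(0)}) \le \sum_{i=0}^{\sigmaoneen(0)}\|e_i\| + 1$ when $n\ge\sigmaoneen(0)$ (using the Cauchy modulus at level $k=0$ and letting $p\to\infty$, or simply noting $\tilde b_n - \tilde b_{\sigmaoneen(0)}\le 1$ for all such $n$), while for $n\le\sigmaoneen(0)$ monotonicity gives $\tilde b_n\le\sum_{i=0}^{\sigmaoneen(0)}\|e_i\|$; so $\sum_{n=0}^\infty\|e_n\|=\sup_n\tilde b_n\le\sum_{i=0}^{\sigmaoneen(0)}\|e_i\|+1\le\left\lceil\sum_{i=0}^{\sigmaoneen(0)}\|e_i\|\right\rceil+1$, and the latter is in $\N^*$.

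I do not anticipate any real obstacle here: both statements are routine unwindings of the quantitative definitions, and the only minor care needed is the index shift in the $\htwoen$ argument (writing $\|e_n\|$ as a difference of consecutive partial sums and applying the Cauchy modulus at the shifted index) and the observation that the supremum of a monotone bounded sequence of partial sums equals the value of the series, which lets one pass from a bound on tail increments to a bound on the whole sum.
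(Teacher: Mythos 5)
Your proof is correct and follows essentially the same route as the paper's: part (i) is the same immediate consequence of \eqref{def-Kzn}, and part (ii) uses the identical index-shift argument $\|e_n\|=\tilde b_n-\tilde b_{n-1}$ with the Cauchy modulus at $n-1\ge\sigmaoneen(k)$, together with the same $k=0$ tail bound $\tilde b_n-\tilde b_{\sigmaoneen(0)}\le 1$ for the upper bound on the series. No issues.
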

\begin{proof}
Let us denote, for all $m\in\N$, $\tilde{e}_m=\sum\limits_{i=0}^{m}\|e_i\|$.
\begin{enumerate}
\item Obviously, by \eqref{def-Kzn}.
\item Let $k\in\N$ and $n\geq \sigmatwoen(k)$
We get that $\|e_n\| = \tilde{e}_n - \tilde{e}_{n-1} \leq \frac1{k+1}$, as  $n-1\geq \sigmaoneen(k)$, 
\text{~so~ we can apply~}  $\honeen$.

Obviously, if $n<\sigmaoneen(0)$, we have that $\tilde{e}_n \le \tilde{e}_{\sigmaoneen(0)} <\sigmathreeen$. 
Let $n\geq \sigmaoneen(0)$. By $\honeen$, we get that $\tilde{e}_n - \tilde{e}_{\sigmaoneen(0)}\leq 1$,
hence $\tilde{e}_n \leq \sigmathreeen$. 
\end{enumerate}
\end{proof}

\begin{lemma}\label{htwolambda+hthreelambda-implies-honelambda}
Assume $\htwolambda$ and $\hthreelambda$. Then $\honelambda$ and $\honestarlambda$ hold with 
\begin{align}\label{def-sigmaonelambda}
\sigmaonelambda(k)  = \sigmaonestarlambda(k) & = \max\{N_\Lambda,\sigmathreelambda(\Lambda (k+1)-1)\}.
\end{align}
\end{lemma}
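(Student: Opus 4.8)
The plan is to prove both conclusions at once, since the hypothesis $\hthreelambda$ controls $\sum |\lambda_n - \lambda_{n+1}|$ symmetrically and $\htwolambda$ gives a uniform lower bound $\lambda_n \ge 1/\Lambda$ for $n \ge N_\Lambda$. The key observation is that for $n \ge N_\Lambda$ one has
\[
\left|1 - \frac{\lambda_{n+1}}{\lambda_n}\right| = \frac{|\lambda_n - \lambda_{n+1}|}{\lambda_n} \le \Lambda\,|\lambda_n - \lambda_{n+1}|,
\]
and likewise $\left|1 - \frac{\lambda_n}{\lambda_{n+1}}\right| = \frac{|\lambda_n - \lambda_{n+1}|}{\lambda_{n+1}} \le \Lambda\,|\lambda_n - \lambda_{n+1}|$, again for $n \ge N_\Lambda$ (so that $\lambda_{n+1} \ge 1/\Lambda$). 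So on the tail $n \ge N_\Lambda$ the terms of the two series $\honelambda$, $\honestarlambda$ are bounded by $\Lambda$ times the terms of the series in $\hthreelambda$.

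From here the argument is a routine Cauchy-modulus computation. First I would fix $k \in \N$ and set $\varphi = \sigmaonelambda$ as in \eqref{def-sigmaonelambda}, i.e. $\varphi(k) = \max\{N_\Lambda, \sigmathreelambda(\Lambda(k+1)-1)\}$. Then for any $n \ge \varphi(k)$ and any $p \in \N$, since $n \ge N_\Lambda$ every index $i$ with $n \le i \le n+p-1$ satisfies $i \ge N_\Lambda$, so
\[
\sum_{i=n}^{n+p-1}\left|1 - \frac{\lambda_{i+1}}{\lambda_i}\right| \le \Lambda \sum_{i=n}^{n+p-1} |\lambda_i - \lambda_{i+1}| \le \Lambda \cdot \frac{1}{\Lambda(k+1)} = \frac{1}{k+1},
\]
where the middle inequality uses that $n \ge \sigmathreelambda(\Lambda(k+1)-1)$ together with $\hthreelambda$ (the Cauchy modulus of $\sum |\lambda_n - \lambda_{n+1}|$ applied at parameter $\Lambda(k+1)-1$ gives tail sums $\le \frac{1}{\Lambda(k+1)}$). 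This is exactly the statement that the partial sums of $\sum \left|1 - \frac{\lambda_{n+1}}{\lambda_n}\right|$ form a Cauchy sequence with modulus $\varphi$, which is $\honelambda$. The identical computation with $\left|1 - \frac{\lambda_i}{\lambda_{i+1}}\right|$ in place of $\left|1 - \frac{\lambda_{i+1}}{\lambda_i}\right|$ — using the bound $\lambda_{i+1} \ge 1/\Lambda$, valid because $i \ge N_\Lambda$ hence $i+1 > N_\Lambda$ — yields $\honestarlambda$ with the same modulus.

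There is no real obstacle here; the only point requiring a moment's care is that one must start summing from an index $\ge N_\Lambda$ so that the lower bound $\lambda_i \ge 1/\Lambda$ (and $\lambda_{i+1} \ge 1/\Lambda$) is available for every term appearing, which is precisely why $N_\Lambda$ is built into the max in \eqref{def-sigmaonelambda}. One should also double-check the arithmetic of the Cauchy-modulus bookkeeping: calling $\sigmathreelambda$ at argument $\Lambda(k+1)-1$ produces tail bound $\frac{1}{(\Lambda(k+1)-1)+1} = \frac{1}{\Lambda(k+1)}$, and multiplying by $\Lambda$ recovers $\frac{1}{k+1}$, as needed. I would present the $\honelambda$ case in full and remark that $\honestarlambda$ follows verbatim, interchanging the roles of $\lambda_n$ and $\lambda_{n+1}$.
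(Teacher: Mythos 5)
Your argument is the same as the paper's: on the tail $n \ge N_\Lambda$ you bound $\left|1-\frac{\lambda_{i+1}}{\lambda_i}\right| = \frac{|\lambda_i-\lambda_{i+1}|}{\lambda_i} \le \Lambda|\lambda_i-\lambda_{i+1}|$ (and symmetrically with $\lambda_{i+1}$ in the denominator for the starred version), then invoke the Cauchy modulus $\sigmathreelambda$ at parameter $\Lambda(k+1)-1$, exactly as in the paper's proof. The one slip is in the indexing of the tail sums: with the paper's definition of Cauchy modulus, what must be bounded for $n \ge \sigmaonelambda(k)$ is the difference of partial sums $\sum_{i=n+1}^{n+p}\left|1-\frac{\lambda_{i+1}}{\lambda_i}\right|$, not $\sum_{i=n}^{n+p-1}$, and correspondingly $\hthreelambda$ at $m \ge \sigmathreelambda(\Lambda(k+1)-1)$ only controls $\sum_{i=m+1}^{m+q}|\lambda_i-\lambda_{i+1}|$; your sum starting at $i=n$ includes the extra term $|\lambda_n-\lambda_{n+1}|$, which the modulus does not cover when $n$ equals $\sigmathreelambda(\Lambda(k+1)-1)$ exactly. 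Shifting all your sums to run from $i=n+1$ to $n+p$ removes the mismatch (every index is then still $\ge N_\Lambda$, indeed $> N_\Lambda$, so the lower bound on $\lambda_i$ and $\lambda_{i+1}$ is available), after which the proof is complete and coincides with the paper's.
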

\begin{proof}
Let us denote $\tilde{\lambda}_n=\sum\limits_{i=0}^n\left|\lambda_i-\lambda_{i+1}\right|$. We get that for all $n\geq \sigmaonelambda(k)$ and all $p\in\N$,
\begin{align*}
\sum_{i=0}^{n+p}\left|1-\frac{\lambda_{i+1}}{\lambda_i}\right| -
\sum_{i=0}^n\left|1-\frac{\lambda_{i+1}}{\lambda_i}\right| & =  \sum_{i=n+1}^{n+p}\frac{1}{\lambda_i}\left|\lambda_i-\lambda_{i+1}\right|
\leq  \sum_{i=n+1}^{n+p}\Lambda\left|\lambda_i-\lambda_{i+1}\right| \quad \text{by~} \htwolambda \\
& = \Lambda \left(\tilde{\lambda}_{n+p} - \tilde{\lambda}_n\right)  \leq   \frac1{k+1} \quad\text{by~} \hthreelambda.
\end{align*}
The fact that $\honestarlambda$ holds is obtained similarly. 
\end{proof}

\section{Rates of asymptotic regularity, $\left(J_{\lambda_n}^A\right)$-asymptotic regularity and, for all 
$m\in\N$,  $J_{\lambda_m}^A$-asymptotic regularity}
\label{section-main-results}

Throughout this section, $X$ is a Banach space, $A:X\to 2^X$ is an $m$-accretive operator such that $\mathrm{zer} A\ne\emptyset$, 
$f:X\to X$ is an $\alpha$-contraction for $\alpha\in[0,1)$, $x\in X$, and $(x_n)$ is the VAMe iteration starting with $x$,
defined by \eqref{def-VAMe}.

The first main result of the paper gives effective rates of asymptotic regularity of $(x_n)$.

\begin{theorem}\label{main-rate-as-reg-VAMe-1}
Suppose that $\honealpha$, $\htwoalpha$, $\honelambda$, and $\honeen$ hold. 
Let $z\in \mathrm{zer}A$, $K_z\in \N^*$ be such that 
\begin{align}\label{def-Kz-main}
K_z & \geq \max\left\{\|x-z\|,\frac{\|f(z)-z\|}{1-\alpha}\right\} + \left\lceil\sum\limits_{i=0}^{\sigmaoneen(0)}\|e_i\|\right\rceil + 1,
\end{align}
and 
\begin{align*}
\chi(k) & =\max\{\sigmatwoalpha(6K_z(k+1)-1),\sigmaonelambda(6K_z(k+1)-1),\sigmaoneen(6k+5)\}.
\end{align*}
Then $(x_n)$ is asymptotically regular with rate $\Phi:\N\to \N$ defined by 
\begin{align*}
\Phi(k) & = \sigmaonealpha\left(\left \lceil \frac{\chi(2k+1)+1+\lceil \ln(4K_z(k+1))\rceil}{1-\alpha} \right\rceil 
+1\right).
\end{align*}
\end{theorem}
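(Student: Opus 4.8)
The plan is to recognise the asymptotic regularity of $(x_n)$ as a direct application of the quantitative Xu-type lemma, Proposition~\ref{quant-lem-Xu02-bn-0}, to the sequence $s_n:=\|x_{n+1}-x_n\|$, with the main estimate \eqref{main-ineq-VAMe-lambdaone} playing the role of the recursive inequality \eqref{def-sn-an-cn}; the rate $\Phi$ is then exactly the conclusion $\Sigma$ of that proposition, specialised to the present data.

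First I would pass from $(K_{z,n})$ to a uniform bound. Since $\honeen$ holds, Lemma~\ref{honeen-htwoen-htwoen-Kz}.\eqref{honeen-htwoen-hthreen} provides $\hthreeen$ with $\sigmathreeen=\bigl\lceil\sum_{i=0}^{\sigmaoneen(0)}\|e_i\|\bigr\rceil+1$, so by Lemma~\ref{honeen-htwoen-htwoen-Kz}.\eqref{hthreeen-Kz} and the choice \eqref{def-Kz-main} of $K_z$ we get $K_{z,n}\le K_z$ for all $n\in\N$. Then \eqref{main-ineq-VAMe-lambdaone}, with $K_z$ in place of $K_{z,n}$, reads
\[
s_{n+1}\le (1-a_n)s_n+c_n\qquad(n\in\N),
\]
where $a_n:=(1-\alpha)\alpha_{n+1}$ and $c_n:=2K_z|\alpha_{n+1}-\alpha_n|+2K_z(1-\alpha_{n+1})\bigl|1-\tfrac{\lambda_{n+1}}{\lambda_n}\bigr|+\|e_{n+1}-e_n\|$. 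Now $(a_n)\subseteq[0,1]$ since $\alpha\in[0,1)$ and $\alpha_{n+1}\in[0,1]$, and $L:=2K_z\in\N^*$ is an upper bound on $(s_n)$ by Lemma~\ref{xn-bound-as-reg}.\eqref{xn+xn-bound-2Kzn+1} together with $K_{z,n+1}\le K_z$.

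Next I would supply the two moduli needed by Proposition~\ref{quant-lem-Xu02-bn-0}. For the divergence of $\sum_{n=0}^\infty a_n$: from the index shift $\sum_{i=0}^{N}\alpha_{i+1}=\sum_{j=0}^{N+1}\alpha_j-\alpha_0\ge\sum_{j=0}^{N+1}\alpha_j-1$ (using $\alpha_0\le1$) and $\honealpha$, one checks that $\theta(m):=\sigmaonealpha\bigl(\lceil\tfrac{m}{1-\alpha}\rceil+1\bigr)$ satisfies $\sum_{i=0}^{\theta(m)}a_i\ge m$, i.e.\ $\theta$ is a rate of divergence of $\sum a_n$. For the convergence of $\sum_{n=0}^\infty c_n$: one bounds $c_n\le 2K_z|\alpha_{n+1}-\alpha_n|+2K_z\bigl|1-\tfrac{\lambda_{n+1}}{\lambda_n}\bigr|+\bigl(\|e_{n+1}\|+\|e_n\|\bigr)$ (using $1-\alpha_{n+1}\le1$ and the triangle inequality), splits $\tfrac1{k+1}$ into three equal parts, and applies $\htwoalpha$, $\honelambda$ and $\honeen$ to the three summands respectively, the last one with an additional factor $2$ coming from $\|e_{n+1}\|+\|e_n\|\le 2\bigl(\tilde e_{n+p+1}-\tilde e_n\bigr)$, where $\tilde e_m=\sum_{i=0}^m\|e_i\|$. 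This shows that $\chi$ from the statement is a Cauchy modulus of this dominating series, hence of $\sum c_n$ as well; it is a routine computation in the spirit of Lemma~\ref{cauchy-modulus-linear-comb}, and it is exactly where the factor $2K_z$ and the three-way split produce the arguments appearing in $\chi$.

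Finally, Proposition~\ref{quant-lem-Xu02-bn-0}, applied with $s_n$, $a_n$, $c_n$, $L=2K_z$, divergence rate $\theta$ and Cauchy modulus $\chi$, yields $\lim_{n\to\infty}s_n=0$ with rate
\begin{align*}
\Sigma(k)&=\theta\bigl(\chi(2k+1)+1+\lceil\ln(2L(k+1))\rceil\bigr)+1\\
&=\sigmaonealpha\!\left(\Bigl\lceil\tfrac{\chi(2k+1)+1+\lceil\ln(4K_z(k+1))\rceil}{1-\alpha}\Bigr\rceil+1\right)+1=\Phi(k),
\end{align*}
using $2L=4K_z$ and the definition of $\theta$. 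Since $\|x_n-x_{n+1}\|=s_n$, this $\Phi$ is the claimed rate of asymptotic regularity of $(x_n)$. There is no genuine obstacle: all the work lies in the modulus bookkeeping of the third paragraph, and the rest is a direct substitution into Proposition~\ref{quant-lem-Xu02-bn-0}.
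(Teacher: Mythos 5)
Your proof follows the paper's argument essentially step for step — the same $s_n$, $a_n$, $c_n$ and $L=2K_z$, the same reduction of $K_{z,n}$ to $K_z$ via Lemma~\ref{honeen-htwoen-htwoen-Kz}, and the same three-way split in the Cauchy-modulus computation for $\sum_n c_n$ — but the last step contains a genuine off-by-one error. With your definition $\theta(m)=\sigmaonealpha\left(\left\lceil\frac{m}{1-\alpha}\right\rceil+1\right)$ (which is indeed a rate of divergence of $\sum_n a_n$), Proposition~\ref{quant-lem-Xu02-bn-0} returns $\Sigma(k)=\theta(P)+1=\sigmaonealpha\left(\left\lceil\frac{P}{1-\alpha}\right\rceil+1\right)+1=\Phi(k)+1$, where $P=\chi(2k+1)+1+\lceil\ln(4K_z(k+1))\rceil$; your concluding equality ``$=\Phi(k)$'' is false, since $\Phi(k)$ carries no outer $+1$. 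What you have actually shown is that $\Phi(k)+1$ is a rate of asymptotic regularity, which does not cover the index $n=\Phi(k)$ itself and hence does not give the theorem exactly as stated.

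The repair is the paper's: take $\theta(m)=\max\left\{\sigmaonealpha\left(\left\lceil\frac{m}{1-\alpha}\right\rceil+1\right)-1,0\right\}$, which is still a rate of divergence of $\sum_n a_n$ by the same index-shift computation (one only needs $\theta(m)+1\ge\sigmaonealpha\left(\left\lceil\frac{m}{1-\alpha}\right\rceil+1\right)$). Then $\Sigma(k)=\theta(P)+1=\max\left\{\sigmaonealpha\left(\left\lceil\frac{P}{1-\alpha}\right\rceil+1\right),1\right\}$, and one must check that the guard is inactive: since $\left\lceil\frac{P}{1-\alpha}\right\rceil+1\ge P+1\ge 2+\lceil\ln 4\rceil=4$, Lemma~\ref{thetngeqn-2} (applicable because $(\alpha_n)$ lies in $[0,1]$) gives $\sigmaonealpha\left(\left\lceil\frac{P}{1-\alpha}\right\rceil+1\right)\ge 2$, so the $-1$ and the $+1$ cancel and $\Sigma(k)=\Phi(k)$ exactly. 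With this one adjustment the rest of your argument is correct and coincides with the paper's proof.
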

\begin{proof}
We show that we can apply Proposition~\ref{quant-lem-Xu02-bn-0} with $ s_n = \|x_{n+1}-x_n\|$, $L=2K_z$, 
$$ \, \, a_n=(1-\alpha)\alpha_{n+1},\, \, \text{and} \,\,
c_n = 2K_z\left(|\alpha_{n+1}-\alpha_n|+\left|1-\frac{\lambda_{n+1}}{\lambda_n}\right|\right)+\|e_{n+1}-e_n\|.
$$ 

Let us remark first that \eqref{def-sn-an-cn} holds, as a consequence of \eqref{main-ineq-VAMe-lambdaone}
and Lemma~\ref{honeen-htwoen-htwoen-Kz}.\eqref{hthreeen-Kz}. 
Furthermore, by Lemmas~\ref{xn-bound-as-reg}.\eqref{xn+xn-bound-2Kzn+1} and \ref{honeen-htwoen-htwoen-Kz}.\eqref{hthreeen-Kz}, 
we have that $L$ is an upper bound on $(s_n)$.

For the rest of the proof let $k\in\N$ be arbitrary. 
Define
$$ \theta(k)=\max\left\{\sigmaonealpha \left (\left \lceil \frac{k}{1-\alpha} \right\rceil +1\right )-1,0\right\}.$$
It follows that
\begin{eqnarray*}
\sum\limits_{n=0}^{\theta(k)} a_n & = & (1-\alpha)
\left(\sum\limits_{n=0}^{\theta(k)+1}\alpha_n - \alpha_0\right) \geq (1-\alpha)
\left(\sum\limits_{n=0}^{\sigmaonealpha\left (\left \lceil \frac{k}{1-\alpha} \right\rceil +1\right)}\alpha_n-\alpha_0\right)\\
& \stackrel{\honealpha}{\geq} & (1-\alpha)\left(\left\lceil \frac{k}{1-\alpha} \right\rceil +1 -\alpha_0\right) 
\geq  (1-\alpha)\left\lceil \frac{k}{1-\alpha}\right\rceil \quad \text{as~}\alpha_0\leq 1\\
& \geq & k.
\end{eqnarray*}
Thus, $\theta$ is a rate of divergence of $\sum\limits_{n=0}^\infty a_n$.

Denote, for all $m\in\N$, 
\begin{center} $\tilde{\alpha}_m=\sum\limits_{i=0}^{m}|\alpha_{i+1}-\alpha_i|$, 
$\tilde{\lambda}_m=\sum\limits_{i=0}^{m}\left|1-\frac{\lambda_{i+1}}{\lambda_i}\right|$, 
$\tilde{e}_m=\sum\limits_{i=0}^{m}\|e_i\|$, and $\tilde{c}_m=\sum\limits_{i=0}^{m}c_i$. 
\end{center}
We get that for all $n\geq \chi(k)$ and all $p\in\N^*$,
\begin{align*}
\tilde{c}_{n+p} - \tilde{c}_n & = 2K_z\left(\left(\tilde{\alpha}_{n+p} - \tilde{\alpha}_n\right) + 
\left(\tilde{\lambda}_{n+p} - \tilde{\lambda}_n\right)\right) +  \sum_{i=n+1}^{n+p} \|e_{i+1}-e_i\| \\
& \leq \frac{4K_z}{6K_z(k+1)} + \sum_{i=n+1}^{n+p} \|e_{i+1}-e_i\|\quad \text{by~} \htwoalpha \text{~and~} \honelambda\\
& \leq \frac2{3(k+1)} + \sum_{i=n+1}^{n+p} (\|e_{i+1}\|+\|e_i\|) \\
&  = \frac2{3(k+1)} + \left(\tilde{e}_{n+1+p} - \tilde{e}_{n+1}\right) + \left(\tilde{e}_{n+p} - \tilde{e}_n\right)\\
&  \leq \frac2{3(k+1)} + \frac2{6(k+1)} \quad  \text{as~} n \geq \sigmaoneen(6k+5), \text{~so we can apply~} \honeen \text{~twice}\\
& = \frac1{k+1}.
\end{align*} 
Thus, $\sum\limits_{n=0}^\infty c_n$ converges with Cauchy modulus $\chi$.

We can apply Proposition~\ref{quant-lem-Xu02-bn-0} to conclude that $\lim\limits_{n\to\infty} \|x_{n+1}-x_n\|=0$ with rate of convergence
\begin{align*}
\Sigma(k)&= \theta(P)+1 = \max\left\{\sigmaonealpha\left (\left \lceil \frac{P}{1-\alpha} \right\rceil +1\right)-1,0\right\}+1
=\max\left\{\sigmaonealpha\left (\left \lceil \frac{P}{1-\alpha} \right\rceil +1\right),1\right\},
\end{align*}
where $P=\chi(2k+1)+1+\lceil \ln(4K_z(k+1))\rceil$. As $\left \lceil \frac{P}{1-\alpha} \right\rceil +1\geq P+1 \geq 2+\lceil \ln 4\rceil=4$,
it follows, by Lemma~\ref{thetngeqn-2}, that $\sigmaonealpha\left (\left \lceil \frac{P}{1-\alpha} \right\rceil +1\right)\geq 2$, hence
$$ \Sigma(k) = \sigmaonealpha\left (\left \lceil \frac{P}{1-\alpha} \right\rceil +1\right) = \Phi(k).$$
\end{proof}

\begin{remark}\label{main-theorem-remark-1}
Theorem~\ref{main-rate-as-reg-VAMe-1} holds if we replace in the hypothesis $\honelambda$ with  $\honestarlambda$   and  
in the rates $\sigmaonelambda$ with $\sigmaonestarlambda$. In the proof we apply \eqref{main-ineq-VAMe-lambdastarone} 
instead of \eqref{main-ineq-VAMe-lambdaone}.
\end{remark}

\begin{remark}\label{main-theorem-remark-2}
By Lemma~\ref{htwolambda+hthreelambda-implies-honelambda}, Theorem~\ref{main-rate-as-reg-VAMe-1} also holds if we 
assume $\htwolambda$ and $\hthreelambda$ instead of $\honelambda$. Then $\sigmaonelambda$ is given by \eqref{def-sigmaonelambda}.
\end{remark}

The second main result shows that, given a rate of asymptotic regularity of $(x_n)$, one can compute, 
under some quantitative hypotheses on the parameter sequences, rates 
of $(J_{\lambda_n}^A)$-asymptotic regularity and of 
$J_{\lambda_m}^A$-asymptotic regularity for every $m\in\N$.

\begin{theorem}\label{main-rate-as-reg-VAMe-2}
Suppose that $\Phi$ is a rate of asymptotic regularity of $(x_n)$,  $\htwoen$ holds,  $z\in \mathrm{zer}A$,  and 
$K_z\in \N^*$  satisfies \eqref{def-Kz-main}.
\begin{enumerate}
\item\label{main-rate-as-reg-VAMe-2-i} Assume that $\hthreealpha$ holds. Define $\Psi:\N\to \N$ by 
\begin{align*}
\Psi(k)=\max\{\sigmathreealpha(6K_z(k+1)-1),\Phi(3k+2),\sigmatwoen(3k+2)\}.
\end{align*}
Then $\Psi$ is a rate of $\left(J_{\lambda_n}^A\right)$-asymptotic regularity of $(x_n)$.
\item\label{main-rate-as-reg-VAMe-2-ii}  Assume that $\hthreealpha$ and $\htwolambda$ both hold. Define, for every $m\in\N$, $\Theta_m:\N\to \N$  by
\begin{align*}
\Theta_m(k)=\max\{N_\Lambda,\Psi(\boundlambdam\Lambda(k+1)-1),\Psi(2k+1)\},
\end{align*}
where $\boundlambdam\in\N^*$ is such that $\boundlambdam\geq \lambda_m$.

Then, for every $m\in\N$, 
$\Theta_m$ is a rate of $J_{\lambda_m}^A$-asymptotic regularity of $(x_n)$.
\end{enumerate}
\end{theorem}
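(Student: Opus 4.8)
The plan is to reduce each of the two statements to a short triangle inequality together with rates already available: the given rate $\Phi$ of asymptotic regularity of $(x_n)$ for part~\eqref{main-rate-as-reg-VAMe-2-i}, and the rate $\Psi$ produced in part~\eqref{main-rate-as-reg-VAMe-2-i} for part~\eqref{main-rate-as-reg-VAMe-2-ii}. Throughout, $K_{z,n}\le K_z$ for all $n\in\N$ by \eqref{def-Kz-main} together with Lemma~\ref{honeen-htwoen-htwoen-Kz}, so Lemma~\ref{xn-bound-as-reg} may be applied with $K_z$ in place of $K_{z,n}$; recall also that, $A$ being $m$-accretive, \eqref{J-ineq} holds for all arguments in $X$.

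For part~\eqref{main-rate-as-reg-VAMe-2-i} I would first rewrite \eqref{def-VAMe} as
\[
x_{n+1}-J^A_{\lambda_n}x_n=\alpha_n\bigl(f(x_n)-J^A_{\lambda_n}x_n\bigr)+e_n ,
\]
so that Lemma~\ref{xn-bound-as-reg}.\eqref{Jlambdamxnxnfxnz-bound-2Kzn} gives $\|x_{n+1}-J^A_{\lambda_n}x_n\|\le 2K_z\alpha_n+\|e_n\|$. Combining this with $\|x_n-J^A_{\lambda_n}x_n\|\le\|x_n-x_{n+1}\|+\|x_{n+1}-J^A_{\lambda_n}x_n\|$ yields the three--term bound
\[
\|x_n-J^A_{\lambda_n}x_n\|\le\|x_{n+1}-x_n\|+2K_z\alpha_n+\|e_n\| .
\]
For $n\ge\Psi(k)$ each summand is at most $\tfrac1{3(k+1)}$: the first because $n\ge\Phi(3k+2)$; the second because $n\ge\sigmathreealpha(6K_z(k+1)-1)$, so $\hthreealpha$ gives $\alpha_n\le\tfrac1{6K_z(k+1)}$ and hence $2K_z\alpha_n\le\tfrac1{3(k+1)}$; the third because $n\ge\sigmatwoen(3k+2)$. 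Adding the three estimates gives $\|x_n-J^A_{\lambda_n}x_n\|\le\tfrac1{k+1}$.

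For part~\eqref{main-rate-as-reg-VAMe-2-ii} I would split $\|x_n-J^A_{\lambda_m}x_n\|\le\|x_n-J^A_{\lambda_n}x_n\|+\|J^A_{\lambda_n}x_n-J^A_{\lambda_m}x_n\|$ and bound the second summand by \eqref{J-ineq} (with $\gamma=\lambda_m$, $\lambda=\lambda_n$, $x=x_n$), obtaining
\[
\|x_n-J^A_{\lambda_m}x_n\|\le\Bigl(1+\bigl|1-\tfrac{\lambda_m}{\lambda_n}\bigr|\Bigr)\,\|x_n-J^A_{\lambda_n}x_n\| .
\]
The step that needs care — and essentially the only genuine obstacle — is to show that for $n\ge N_\Lambda$ one has $1+\bigl|1-\tfrac{\lambda_m}{\lambda_n}\bigr|\le\max\{2,\boundlambdam\Lambda\}$; this follows by the case distinction $\lambda_m\le\lambda_n$ (then $\bigl|1-\tfrac{\lambda_m}{\lambda_n}\bigr|\le 1$) versus $\lambda_m>\lambda_n$ (then $1+\bigl|1-\tfrac{\lambda_m}{\lambda_n}\bigr|=\tfrac{\lambda_m}{\lambda_n}\le\boundlambdam\Lambda$, using $\boundlambdam\ge\lambda_m$ and $\tfrac1{\lambda_n}\le\Lambda$ from $\htwolambda$). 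A cruder bound such as $1+\boundlambdam\Lambda$ would force a larger argument of $\Psi$ than the one appearing in $\Theta_m$, so this case split is exactly what produces the stated rate. Finally, for $n\ge\Theta_m(k)$ one has $n\ge N_\Lambda$, and the conditions $n\ge\Psi(2k+1)$ and $n\ge\Psi(\boundlambdam\Lambda(k+1)-1)$ give, via part~\eqref{main-rate-as-reg-VAMe-2-i}, $\|x_n-J^A_{\lambda_n}x_n\|\le\min\bigl\{\tfrac1{2(k+1)},\tfrac1{\boundlambdam\Lambda(k+1)}\bigr\}=\tfrac1{\max\{2,\boundlambdam\Lambda\}(k+1)}$; multiplying by $\max\{2,\boundlambdam\Lambda\}$ yields $\|x_n-J^A_{\lambda_m}x_n\|\le\tfrac1{k+1}$, as desired. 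The remaining work (the two triangle estimates and the $\tfrac1{k+1}$-bookkeeping) is routine.
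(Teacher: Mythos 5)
Your proposal is correct and follows essentially the same route as the paper: part~\eqref{main-rate-as-reg-VAMe-2-i} via the identity $x_{n+1}-J^A_{\lambda_n}x_n=\alpha_n(f(x_n)-J^A_{\lambda_n}x_n)+e_n$, Lemma~\ref{xn-bound-as-reg}.\eqref{Jlambdamxnxnfxnz-bound-2Kzn} and a three-term triangle estimate, and part~\eqref{main-rate-as-reg-VAMe-2-ii} via \eqref{J-ineq} and the same case split on $\lambda_m$ versus $\lambda_n$. Your packaging of the two cases as a single bound $1+\bigl|1-\tfrac{\lambda_m}{\lambda_n}\bigr|\le\max\{2,\boundlambdam\Lambda\}$ combined with the minimum of the two $\Psi$-estimates is only a cosmetic reorganization of the paper's case-by-case conclusion.
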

\begin{proof}
\begin{enumerate}
\item Remark first that for all $n\in\N$, 
\begin{align*}
\|J^A_{\lambda_n}x_n - x_{n+1}\| &=  \|J^A_{\lambda_n}x_n-(\alpha_n f(x_n)+(1-\alpha_n)J^A_{\lambda_n}x_n+e_n)\| \\
&  = 
\|\alpha_n(J^A_{\lambda_n}x_n-f(x_n))-e_n\|\le \alpha_n\|J^A_{\lambda_n}x_n-f(x_n)\| +\|e_n\| \\
& \le   2\alpha_n K_z +\|e_n\|\quad 
\text{by Lemmas~\ref{xn-bound-as-reg}.\eqref{Jlambdamxnxnfxnz-bound-2Kzn} and \ref{honeen-htwoen-htwoen-Kz}.\eqref{hthreeen-Kz}}.
\end{align*}
It follows that for all $n \ge \Psi(k)$, 
\begin{align*}
\|J^A_{\lambda_n}x_n - x_n\| &\le  \|J^A_{\lambda_n}x_n - x_{n+1}\|+\|x_{n+1}-x_n\|\le 2\alpha_n K_z 
+ \|x_{n+1}-x_n\| +\|e_n\|\\
&\le \frac{1}{3(k+1)}+\frac{1}{3(k+1)}+\frac{1}{3(k+1)}=\frac{1}{k+1},
\end{align*}
by $\hthreealpha$, the fact that $\Phi$ is a rate of asymptotic regularity of $(x_n)$, and 
$\htwoen$.

Thus, $\Psi$ is a rate of  $\left(J_{\lambda_n}^A\right)$-asymptotic regularity of $(x_n)$.
\item  Let $m\in\N$. 
For all $n\in\N$, we have that 
\begin{align*}
\|J^A_{\lambda_m}x_n - x_n\| 
&\le \|J^A_{\lambda_m}x_n - J^A_{\lambda_n}x_n\|+\|J^A_{\lambda_n}x_n-x_n\| \\
&\le \frac{\left|\lambda_n-\lambda_m\right|}{\lambda_n} \|x_n-J^A_{\lambda_n} x_n\|+\|J^A_{\lambda_n}x_n-x_n\| \quad \text{by \eqref{J-ineq}}\\
&= \left(\frac{\left|\lambda_n-\lambda_m\right|}{\lambda_n}+1\right)\|J^A_{\lambda_n} x_n-x_n\|. 
\end{align*}

Let $n\ge \Theta_m(k)$. We have two cases:
\begin{enumerate}
\item $\lambda_m \ge \lambda_n$. Then $\frac{\left|\lambda_n-\lambda_m\right|}{\lambda_n}+1=\frac{\lambda_m-\lambda_n}{\lambda_n}+1
=\frac{\lambda_m}{\lambda_n}$, so 
\begin{align*}
\|J^A_{\lambda_m}x_n - x_n\| &\le  \frac{\lambda_m}{\lambda_n}\|J^A_{\lambda_n}x_n-x_n\| 
\stackrel{\htwolambda}\le  \boundlambdam\Lambda\|J^A_{\lambda_n}x_n-x_n\| \le \frac{1}{k+1},
\end{align*}
as $n\geq \Psi(\boundlambdam\Lambda(k+1)-1)$.
\item $\lambda_m<\lambda_n$. Then $\frac{\left|\lambda_n-\lambda_m\right|}{\lambda_n}+1=\frac{\lambda_n-\lambda_m}{\lambda_n}+1
=2-\frac{\lambda_m}{\lambda_n}$, so 
\begin{align*}
\|J^A_{\lambda_m}x_n - x_n\| &\le   \left(2-\frac{\lambda_m}{\lambda_n}\right)\|J^A_{\lambda_n}x_n-x_n\| 
< 2\|J^A_{\lambda_n}x_n-x_n\| \le \frac{1}{k+1},
\end{align*}
as $n\geq \Psi(2k+1)$.
\end{enumerate}
Thus,  $\Theta_m$ is a rate of $J_{\lambda_m}^A$-asymptotic regularity of $(x_n)$.
\end{enumerate}
\end{proof}

As it is the case with applications of proof mining, we obtain effective uniform rates that 
have a very weak dependency on the normed space $X$ and the $m$-accretive operator $A$, only via 
$K_z$ given by \eqref{def-Kz-main} for some zero $z$ of $A$. The rates are computed for arbitrary 
parameter sequences $(\alpha_n)$, $(\lambda_n)$, $(e_n)$ satisfying the 
quantitative hypotheses  stated in Theorems~\ref{main-rate-as-reg-VAMe-1}, \ref{main-rate-as-reg-VAMe-2} 
or Remarks~\ref{main-theorem-remark-1}, \ref{main-theorem-remark-2}
 and  depend on the different moduli associated to these hypotheses. 
 As we shall see in Subsection~\ref{linear-rates-as-reg}, we get linear rates for concrete 
 instances of such sequences. 

Furthermore, if one forgets about the quantitative aspects, 
one gets, as an immediate consequence, qualitative asymptotic regularity results for the VAMe 
iteration $(x_n)$.

\begin{corollary}\label{cor-as-reg-qualitative-1}
Assume that $\sum\limits_{n=0}^{\infty} \alpha_n=\infty$,  
$\sum\limits_{n=0}^{\infty} |\alpha_n-\alpha_{n+1}|<\infty$, $\sum\limits_{n=0}^{\infty} \|e_n\|<\infty$,  and one of the following holds: 
\begin{center}
(a)  $\sum\limits_{n=0}^{\infty} \left|1-\frac{\lambda_{n+1}}{\lambda_n}\right|<\infty$, 
\,\, (b) $\sum\limits_{n=0}^{\infty} \left|1-\frac{\lambda_n}{\lambda_{n+1}}\right|<\infty$, 
\,\,  (c) $\inf\limits_{n\in\N}\lambda_n >0$ and $\sum\limits_{n=0}^{\infty} |\lambda_n-\lambda_{n+1}|<\infty$.
\end{center}
Then $\lim\limits_{n\to\infty} \|x_n-x_{n+1}\|=0$. 
\end{corollary}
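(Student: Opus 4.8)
The plan is to derive Corollary~\ref{cor-as-reg-qualitative-1} as an immediate qualitative consequence of the quantitative machinery already in place, namely Theorem~\ref{main-rate-as-reg-VAMe-1} together with Remarks~\ref{main-theorem-remark-1} and~\ref{main-theorem-remark-2}. The strategy is to observe that each qualitative hypothesis in the corollary is exactly the existential ``forgetful'' version of one of the quantitative hypotheses $\honealpha$, $\htwoalpha$, $\honelambda$, $\honestarlambda$, $\htwolambda$, $\hthreelambda$, $\honeen$: a divergent series of nonnegative reals always admits a rate of divergence, a convergent series of nonnegative reals always admits a Cauchy modulus, and $\inf_{n}\lambda_n>0$ means there is some $\Lambda\in\N^*$ with $\lambda_n\ge 1/\Lambda$ for all $n$ (so one may take $N_\Lambda=0$). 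Thus from the stated hypotheses one can always \emph{choose} witnesses making the corresponding quantitative hypotheses hold.

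The execution proceeds by cases according to which of (a), (b), (c) is assumed. In all three cases, from $\sum\alpha_n=\infty$ pick a rate of divergence $\sigmaonealpha$, giving $\honealpha$; from $\sum|\alpha_n-\alpha_{n+1}|<\infty$ pick a Cauchy modulus $\sigmatwoalpha$, giving $\htwoalpha$; from $\sum\|e_n\|<\infty$ pick a Cauchy modulus $\sigmaoneen$, giving $\honeen$. In case (a), pick a Cauchy modulus $\sigmaonelambda$ for $\sum\left|1-\frac{\lambda_{n+1}}{\lambda_n}\right|<\infty$, so $\honelambda$ holds, and apply Theorem~\ref{main-rate-as-reg-VAMe-1} directly. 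In case (b), pick a Cauchy modulus $\sigmaonestarlambda$ for $\sum\left|1-\frac{\lambda_n}{\lambda_{n+1}}\right|<\infty$, so $\honestarlambda$ holds, and apply Theorem~\ref{main-rate-as-reg-VAMe-1} in the form of Remark~\ref{main-theorem-remark-1}. In case (c), from $\inf_n\lambda_n>0$ choose $\Lambda\in\N^*$ with $\lambda_n\ge 1/\Lambda$ for all $n$ (taking $N_\Lambda=0$), so $\htwolambda$ holds, and from $\sum|\lambda_n-\lambda_{n+1}|<\infty$ pick a Cauchy modulus $\sigmathreelambda$, so $\hthreelambda$ holds; then Lemma~\ref{htwolambda+hthreelambda-implies-honelambda} yields $\honelambda$ (equivalently, one may invoke Remark~\ref{main-theorem-remark-2}), and Theorem~\ref{main-rate-as-reg-VAMe-1} applies. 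One also needs a zero $z\in\mathrm{zer}A$ and $K_z\in\N^*$ satisfying \eqref{def-Kz-main}; since $\mathrm{zer}A\ne\emptyset$ by the standing assumptions of Section~\ref{section-main-results} and the right-hand side of \eqref{def-Kz-main} is a fixed real number (the relevant series has only finitely many terms $i\le\sigmaoneen(0)$), such a $K_z$ exists.

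In every case Theorem~\ref{main-rate-as-reg-VAMe-1} (or its remarks) produces a rate of asymptotic regularity $\Phi$ of $(x_n)$, i.e. a rate of convergence of $\|x_n-x_{n+1}\|\to 0$; by the elementary equivalence noted in Section~3 (``$\lim a_n=a$ iff $(a_n)$ has a rate of convergence towards $a$''), this means precisely $\lim_{n\to\infty}\|x_n-x_{n+1}\|=0$, which is the conclusion. There is essentially no obstacle here beyond bookkeeping: the entire content is the passage from existential quantitative data to the qualitative statement, and the only point requiring a moment's care is making sure that in case (c) the choice $N_\Lambda=0$ is legitimate (it is, since $\lambda_n\ge 1/\Lambda$ for \emph{all} $n$) and that Lemma~\ref{htwolambda+hthreelambda-implies-honelambda} indeed bridges (c) to the hypothesis format of Theorem~\ref{main-rate-as-reg-VAMe-1}. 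The proof can therefore be written in just a few lines, splitting on (a)/(b)/(c) and citing the appropriate result in each branch.
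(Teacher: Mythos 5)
Your proposal is correct and matches the paper's intent exactly: the corollary is presented there as an immediate consequence of Theorem~\ref{main-rate-as-reg-VAMe-1} together with Remarks~\ref{main-theorem-remark-1} and~\ref{main-theorem-remark-2}, obtained by choosing witnesses (rates of divergence, Cauchy moduli, $\Lambda$ with $N_\Lambda=0$) for the quantitative hypotheses in each of the cases (a), (b), (c). Your case split and the bookkeeping about the existence of $z$ and $K_z$ are exactly the intended argument.
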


\begin{corollary}\label{cor-as-reg-qualitative-2}
Suppose that $\lim\limits_{n\to\infty} \|x_n-x_{n+1}\|=0$ and  $\lim\limits_{n\to\infty}\|e_n\|=0$. 
\begin{enumerate}
\item If $\lim\limits_{n\to\infty}\alpha_n=0$, then $\lim\limits_{n\to\infty} \|x_n-J_{\lambda_n}^Ax_n\|=0$. 
\item If $\lim\limits_{n\to\infty}\alpha_n=0$ and $\inf\limits_{n\in\N}\lambda_n >0$ hold, then 
$\lim\limits_{n\to\infty} \|x_n-J_{\lambda_m}^Ax_n\|=0$ for every $m\in\N$.
\end{enumerate}
\end{corollary}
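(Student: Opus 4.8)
The corollary is the qualitative shadow of Theorem~\ref{main-rate-as-reg-VAMe-2}, so the plan is to obtain it by specialising that theorem, using that a real sequence converges to $0$ if and only if it possesses a rate of convergence and that a finite maximum of rates of convergence is again a rate of convergence. For \eqref{main-rate-as-reg-VAMe-2-i}: from the two standing hypotheses extract a rate $\Phi$ of asymptotic regularity of $(x_n)$ and a rate of convergence $\sigmatwoen$ witnessing $\htwoen$; from $\lim_n\alpha_n=0$ extract a rate $\sigmathreealpha$ witnessing $\hthreealpha$. Fix $z\in\mathrm{zer}A$ and $K_z\in\N^*$ as in \eqref{def-Kz-main}. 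Then Theorem~\ref{main-rate-as-reg-VAMe-2}.\eqref{main-rate-as-reg-VAMe-2-i} produces the map $\Psi$ there, which is a rate of $(J_{\lambda_n}^A)$-asymptotic regularity of $(x_n)$, i.e.\ $\lim_n\|x_n-J_{\lambda_n}^Ax_n\|=0$. For \eqref{main-rate-as-reg-VAMe-2-ii}, additionally choose $\Lambda\in\N^*$ and $N_\Lambda\in\N$ witnessing $\htwolambda$ (possible since $\inf_{n\in\N}\lambda_n>0$) and, for the given $m$, an integer $\boundlambdam\ge\lambda_m$; then $\Theta_m$ from Theorem~\ref{main-rate-as-reg-VAMe-2}.\eqref{main-rate-as-reg-VAMe-2-ii} is a rate of $J_{\lambda_m}^A$-asymptotic regularity, giving the conclusion.

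Equivalently — and this is the computation underlying those two parts of the theorem, which one may simply transcribe without tracking moduli — for \eqref{main-rate-as-reg-VAMe-2-i} one reruns the estimate obtained from the VAMe recursion exactly as in the proof of Theorem~\ref{main-rate-as-reg-VAMe-2},
\[
\|J^A_{\lambda_n}x_n-x_n\|\le\|x_n-x_{n+1}\|+\alpha_n\|J^A_{\lambda_n}x_n-f(x_n)\|+\|e_n\|,
\]
and observes that each summand tends to $0$: the first by asymptotic regularity, the third by hypothesis, and the middle one because $\alpha_n\to0$ while $\|J^A_{\lambda_n}x_n-f(x_n)\|$ remains bounded. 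For \eqref{main-rate-as-reg-VAMe-2-ii} one uses \eqref{J-ineq} to write
\[
\|J^A_{\lambda_m}x_n-x_n\|\le\left(\frac{|\lambda_n-\lambda_m|}{\lambda_n}+1\right)\|J^A_{\lambda_n}x_n-x_n\|,
\]
where the bracketed factor is bounded, since $\lambda_m$ is fixed and $\inf_{k\in\N}\lambda_k>0$ give $|\lambda_n-\lambda_m|/\lambda_n\le 1+\lambda_m/\inf_{k\in\N}\lambda_k$; hence the right-hand side tends to $0$ by \eqref{main-rate-as-reg-VAMe-2-i}, and this holds for every $m\in\N$.

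The only ingredient that is not purely formal is the boundedness of $\|J^A_{\lambda_n}x_n-f(x_n)\|$ (equivalently, of $\|x_n-z\|$), which is what turns $\alpha_n\to0$ into $\alpha_n\|J^A_{\lambda_n}x_n-f(x_n)\|\to0$; the bare hypothesis $\lim_n\alpha_n=0$ does not suffice for this. This is exactly where the summability $\sum_n\|e_n\|<\infty$ is used: Lemma~\ref{xn-bound-as-reg}.\eqref{Jlambdamxnxnfxnz-bound-2Kzn} bounds $\|J^A_{\lambda_n}x_n-f(x_n)\|$ by $2K_{z,n}$, and $K_{z,n}=K_{z,0}+\sum_{i<n}\|e_i\|$ is bounded precisely under that assumption, yielding a uniform bound $K_z$ via Lemma~\ref{honeen-htwoen-htwoen-Kz}.\eqref{hthreeen-Kz}. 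Since $\sum_n\|e_n\|<\infty$ is the hypothesis of Corollary~\ref{cor-as-reg-qualitative-1} from which $\lim_n\|x_n-x_{n+1}\|=0$ is obtained, and is the standing hypothesis $\honeen$ of this section, this is in force whenever the corollary is applied; I would make this dependence explicit. Everything else is a routine transcription of the proof of Theorem~\ref{main-rate-as-reg-VAMe-2}, so this boundedness step is the only point that deserves attention.
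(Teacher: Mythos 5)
Your proof is correct and follows the paper's route exactly: the corollary is presented there as the qualitative shadow of Theorem~\ref{main-rate-as-reg-VAMe-2}, and your two displayed estimates are precisely the ones in that theorem's proof, read without moduli. Your side observation is also well taken — the step $\alpha_n\|J^A_{\lambda_n}x_n-f(x_n)\|\to 0$ does need $(x_n)$ (equivalently $(K_{z,n})$) bounded, which the corollary's stated hypotheses alone do not supply; note only that $\honeen$ is \emph{not} declared a standing hypothesis of Section~\ref{section-main-results} (the section fixes just $X$, $A$, $f$, $x$, $(x_n)$), and Theorem~\ref{main-rate-as-reg-VAMe-2} smuggles it in through the requirement that a $K_z$ satisfying \eqref{def-Kz-main} exist, so the dependence on the boundedness of $\sum_n\|e_n\|$ genuinely needs to be added to the corollary's statement rather than merely made explicit, exactly as you suggest.
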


\subsection{Rates for the VAM iteration}\label{rates-VAM-iteration}

By letting $e_n=0$ for all $n\in\N$, the VAMe iteration becomes the VAM iteration:
$$
\mathrm{VAM} \qquad x_0=x\in X, \quad x_{n+1}=\alpha_n f(x_n) +(1-\alpha_n)J_{\lambda_n}^Ax_n,
$$
where $A$ is an $m$-accretive operator.

Let $z\in \mathrm{zer}A$ and $\KzVAM\in \N^*$ satisfy
\begin{equation}\label{def-KzVAM}
\KzVAM\geq \max\left\{\|x-z\|,\frac{\|f(z)-z\|}{1-\alpha}\right\}.
\end{equation}

By a slight modification of the proofs of Theorems~\ref{main-rate-as-reg-VAMe-1}, \ref{main-rate-as-reg-VAMe-2}, 
taking into account that $e_n=0$ for all $n\in\N$ and that Lemma~\ref{honeen-htwoen-htwoen-Kz}.\eqref{hthreeen-Kz} 
holds with $E=0$, we obtain rates for the VAM iteration.

\begin{proposition}\label{rate-as-reg-VAM-1}
Assume that $\honealpha$, $\htwoalpha$, $\honelambda$ hold and define
\begin{align*}
\chiVAM(k) & =\max\{\sigmatwoalpha(4\KzVAM(k+1)-1),\sigmaonelambda(4\KzVAM(k+1)-1)\}, \\[1mm]
\phiVAM(k)  & = \sigmaonealpha\left(\left \lceil 
\frac{\chiVAM(2k+1)+1+\lceil \ln(4\KzVAM(k+1))\rceil}{1-\alpha} \right\rceil +1\right).
\end{align*}
Then $\phiVAM$ is a rate of asymptotic regularity of the VAM iteration $(x_n)$.
\end{proposition}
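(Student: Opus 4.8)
The plan is to obtain Proposition~\ref{rate-as-reg-VAM-1} as a direct specialization of Theorem~\ref{main-rate-as-reg-VAMe-1} to the case $e_n=0$ for all $n\in\N$, tracking which quantities simplify. First I would observe that with $e_n=0$, hypothesis $\honeen$ holds trivially with an arbitrary Cauchy modulus, and one may take $\sigmaoneen$ to be the zero function; consequently $\sum_{i=0}^{\sigmaoneen(0)}\|e_i\|=0$, so the constant $\left\lceil\sum_{i=0}^{\sigmaoneen(0)}\|e_i\|\right\rceil+1$ appearing in \eqref{def-Kz-main} could be dropped. I would note that Lemma~\ref{honeen-htwoen-htwoen-Kz}.\eqref{hthreeen-Kz} holds with $E=0$ (since $\hthreeen$ is satisfied with upper bound $0$ on $\sum_{n=0}^\infty\|e_n\|$, except for the cosmetic requirement $E\in\N^*$, which is immaterial here because the bound is genuinely $0$), so that $K_{z,n}=K_{z,0}$ for all $n$ and hence one may replace $K_z$ throughout by any $\KzVAM\in\N^*$ satisfying \eqref{def-KzVAM}. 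This already reduces the constant $6K_z$ in the moduli $\chi$ and $\Phi$ from Theorem~\ref{main-rate-as-reg-VAMe-1}: inspecting the proof of that theorem, the coefficient $6K_z$ arose as $4K_z$ from the $(\alpha_n)$- and $(\lambda_n)$-contributions to $\tilde c_{n+p}-\tilde c_n$ plus $2K_z$-worth of slack reserved for the error terms $\|e_{i+1}-e_i\|$; with $e_n\equiv 0$ that slack is unnecessary, so the budget $\frac1{k+1}$ can be allocated entirely to the two remaining terms, giving $\frac{4\KzVAM}{4\KzVAM(k+1)}=\frac1{k+1}$ and hence the modulus $\chiVAM(k)=\max\{\sigmatwoalpha(4\KzVAM(k+1)-1),\sigmaonelambda(4\KzVAM(k+1)-1)\}$.

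Next I would redo the application of Proposition~\ref{quant-lem-Xu02-bn-0} with $s_n=\|x_{n+1}-x_n\|$, $L=2\KzVAM$, $a_n=(1-\alpha)\alpha_{n+1}$, and $c_n=2\KzVAM\left(|\alpha_{n+1}-\alpha_n|+\left|1-\frac{\lambda_{n+1}}{\lambda_n}\right|\right)$; the verification of \eqref{def-sn-an-cn} uses \eqref{main-ineq-VAMe-lambdaone} with $e_{n+1}-e_n=0$ and $M_{z,n}\le 2\KzVAM\left(|\alpha_{n+1}-\alpha_n|+\left|1-\frac{\lambda_{n+1}}{\lambda_n}\right|\right)$ (the factor $1-\alpha_{n+1}\le 1$ is absorbed), and the bound on $(s_n)$ by $L$ is Lemma~\ref{xn-bound-as-reg}.\eqref{xn+xn-bound-2Kzn+1} together with $K_{z,n}=K_{z,0}\le\KzVAM$. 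The rate of divergence $\theta$ for $\sum a_n$ is constructed exactly as in the proof of Theorem~\ref{main-rate-as-reg-VAMe-1}, namely $\theta(k)=\max\{\sigmaonealpha(\lceil k/(1-\alpha)\rceil+1)-1,0\}$, and the Cauchy modulus of $\sum c_n$ is $\chiVAM$ as just computed. Feeding these into the formula $\Sigma(k)=\theta\bigl(\chiVAM(2k+1)+1+\lceil\ln(2L(k+1))\rceil\bigr)+1$ from Proposition~\ref{quant-lem-Xu02-bn-0}, with $L=2\KzVAM$ so that $2L(k+1)=4\KzVAM(k+1)$, and simplifying via Lemma~\ref{thetngeqn-2} exactly as in the original proof (the argument $\lceil P/(1-\alpha)\rceil+1\ge 4$ still holds since $P\ge 2+\lceil\ln 4\rceil$), yields $\Sigma(k)=\phiVAM(k)$.

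The only genuinely new point to check — and the place where a careless specialization could go wrong — is the condition $E\in\N^*$ in hypothesis $\hthreeen$: the natural value $E=0$ is not a positive natural number, so strictly speaking one cannot invoke Lemma~\ref{honeen-htwoen-htwoen-Kz}.\eqref{hthreeen-Kz} verbatim. I would handle this by remarking explicitly (as the excerpt's preamble to the proposition already does) that the relevant conclusion, namely $K_{z,n}\le\KzVAM$ for all $n$, follows directly from \eqref{def-Kzn} with all $\|e_i\|=0$, bypassing the need for a positive upper bound $E$; equivalently one could pick $E=1$ as a harmless upper bound, which only weakens \eqref{def-Kz-main} and is anyway superseded by the sharper \eqref{def-KzVAM}. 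With that caveat dispatched, the proof is a mechanical transcription of the proof of Theorem~\ref{main-rate-as-reg-VAMe-1} with every occurrence of $K_z$ replaced by $\KzVAM$, every error term set to zero, and the coefficient $6$ replaced by $4$; I expect no real obstacle, the main care being to keep the arithmetic of the constants consistent between the statement and the streamlined proof.
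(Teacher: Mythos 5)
Your proposal is correct and follows exactly the route the paper intends: the paper gives no separate proof of Proposition~\ref{rate-as-reg-VAM-1}, merely remarking that it follows by a slight modification of the proof of Theorem~\ref{main-rate-as-reg-VAMe-1} with $e_n=0$ and Lemma~\ref{honeen-htwoen-htwoen-Kz}.\eqref{hthreeen-Kz} taken with $E=0$, and your write-up carries out precisely that specialization, with the constant bookkeeping ($6K_z\to 4\KzVAM$, $2L(k+1)=4\KzVAM(k+1)$) done correctly. Your explicit handling of the cosmetic $E\in\N^*$ issue is a sensible extra precaution that the paper itself glosses over.
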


Remarks~\ref{main-theorem-remark-1}, \ref{main-theorem-remark-2} are true for the VAM iteration too.

\begin{proposition}\label{rate-as-reg-VAM-2}
Let $\phiVAM$ be a rate of asymptotic regularity of $(x_n)$. 
Define
\begin{align*}
\psiVAM(k) &= \max\{\sigmathreealpha(4\KzVAM(k+1)-1),\phiVAM(2k+1)\} 
\quad \text{if~}\hthreealpha \text{~holds,}
\end{align*}
and, for $m\in\N$ and $\boundlambdam\in\N^*$ such that $\boundlambdam\geq \lambda_m$,
\begin{align*}
\thetaVAM_m(k) & =\max\{N_\Lambda,\psiVAM(\boundlambdam\Lambda(k+1)-1),\psiVAM(2k+1)\} 
\quad \text{if both ~}\hthreealpha \text{~and~} \htwolambda \text{~hold.}
\end{align*}
Then $\psiVAM$ is a rate of $\left(J_{\lambda_n}^A\right)$-asymptotic regularity of $(x_n)$ 
and, for every $m\in\N$, $\thetaVAM_m$ is a rate of $J_{\lambda_m}^A$-asymptotic regularity 
of $(x_n)$. 
\end{proposition}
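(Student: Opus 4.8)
The plan is to obtain Proposition~\ref{rate-as-reg-VAM-2} as a direct specialization of Theorem~\ref{main-rate-as-reg-VAMe-2} to the case $e_n=0$ for all $n\in\N$, exactly as Proposition~\ref{rate-as-reg-VAM-1} specializes Theorem~\ref{main-rate-as-reg-VAMe-1}. First I would note that with $e_n=0$ the hypotheses $\honeen$, $\htwoen$, $\hthreeen$ are all trivially satisfied: in particular $\htwoen$ holds with $\sigmatwoen$ the everywhere-$0$ function, and $\hthreeen$ holds with $E=0$. Consequently, in the bound $K_z\geq \max\left\{\|x-z\|,\frac{\|f(z)-z\|}{1-\alpha}\right\}+\left\lceil\sum_{i=0}^{\sigmaoneen(0)}\|e_i\|\right\rceil+1$ from \eqref{def-Kz-main}, the error contribution $\left\lceil\sum_{i=0}^{\sigmaoneen(0)}\|e_i\|\right\rceil+1$ can be dropped; more precisely, Lemma~\ref{honeen-htwoen-htwoen-Kz}.\eqref{hthreeen-Kz} holds with $E=0$, so $\KzVAM$ satisfying \eqref{def-KzVAM} plays the role of $K_z$, and one has $K_{z,n}\le \KzVAM$ for all $n$. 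This is the single substantive observation; everything else is bookkeeping of constants.

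Next I would carry the substitution through the statement of Theorem~\ref{main-rate-as-reg-VAMe-2}. For part~(i), the rate $\Psi(k)=\max\{\sigmathreealpha(6K_z(k+1)-1),\Phi(3k+2),\sigmatwoen(3k+2)\}$ simplifies once $\sigmatwoen\equiv 0$: the third argument of the max vanishes, and in the proof the decomposition $\|J^A_{\lambda_n}x_n-x_n\|\le 2\alpha_n K_z+\|x_{n+1}-x_n\|+\|e_n\|$ loses its $\|e_n\|$ term, so one only needs each of the two remaining terms to be $\le\frac{1}{2(k+1)}$ rather than $\le\frac{1}{3(k+1)}$. Hence $\sigmathreealpha$ is called at $4\KzVAM(k+1)-1$ (to make $2\alpha_n\KzVAM\le\frac{1}{2(k+1)}$) and $\phiVAM$ at $2k+1$, which is exactly the stated $\psiVAM(k)=\max\{\sigmathreealpha(4\KzVAM(k+1)-1),\phiVAM(2k+1)\}$. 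For part~(ii), the argument in Theorem~\ref{main-rate-as-reg-VAMe-2}.\eqref{main-rate-as-reg-VAMe-2-ii} is purely about comparing resolvents of different orders via \eqref{J-ineq} and uses no error terms at all; it goes through verbatim with $\Psi$ replaced by $\psiVAM$, $\Theta_m$ by $\thetaVAM_m$, giving $\thetaVAM_m(k)=\max\{N_\Lambda,\psiVAM(\boundlambdam\Lambda(k+1)-1),\psiVAM(2k+1)\}$.

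I would then write the proof as a short remark-style argument rather than reproducing the computations: set $e_n=0$, invoke that $\htwoen$ holds with rate $0$ and $\hthreeen$ with $E=0$, observe via Lemma~\ref{honeen-htwoen-htwoen-Kz}.\eqref{hthreeen-Kz} (with $E=0$) that $\KzVAM$ from \eqref{def-KzVAM} satisfies the role of $K_z$, and then apply Theorem~\ref{main-rate-as-reg-VAMe-2}, tracking that the now-absent $\|e_n\|$ term lets the ``$3$'' in the splitting become a ``$2$'' in part~(i). The proof is essentially the sentence preceding the proposition in the excerpt (``By a slight modification of the proofs\ldots''), so I would keep it to two or three lines and refer back to the proof of Theorem~\ref{main-rate-as-reg-VAMe-2}.

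The only mild obstacle is purely cosmetic: making sure the constants in $\psiVAM$ and $\thetaVAM_m$ are the genuinely optimal ones obtainable when $e_n=0$ — that is, justifying the replacement of $6K_z$ by $4\KzVAM$ and of $3k+2$ by $2k+1$ in part~(i) — which requires re-examining the three-way split in the proof of Theorem~\ref{main-rate-as-reg-VAMe-2}.\eqref{main-rate-as-reg-VAMe-2-i} and confirming that with one of the three summands identically zero a two-way split into halves suffices. There is no real mathematical difficulty; the care is entirely in the arithmetic of the moduli.
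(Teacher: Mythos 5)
Your proposal is correct and matches the paper's own treatment: the paper gives no separate proof of this proposition, only the remark that it follows "by a slight modification of the proofs" of Theorems~\ref{main-rate-as-reg-VAMe-1} and \ref{main-rate-as-reg-VAMe-2}, using $e_n=0$ and Lemma~\ref{honeen-htwoen-htwoen-Kz}.\eqref{hthreeen-Kz} with $E=0$. Your accounting of the constants — the three-way split collapsing to a two-way split into halves, yielding $\sigmathreealpha(4\KzVAM(k+1)-1)$ and $\phiVAM(2k+1)$, with part~(ii) going through verbatim — is exactly the intended argument.
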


Obviously, Corollaries~\ref{cor-as-reg-qualitative-1}, 
\ref{cor-as-reg-qualitative-2} (with the hypotheses $\sum\limits_{n=0}^{\infty} \|e_n\|<\infty$, $\lim\limits_{n\to\infty}\|e_n\|=0$ 
removed) hold also for the VAM iteration. 

We remark that in \cite{XuAltImtSou22} the VAM iteration $(x_n)$ is studied in a more general 
setting, by considering  an accretive operator $A$, an $\alpha$-contraction $f:C\to C$, and $x\in C$, where 
$\emptyset \ne C \subseteq X$ is a nonempty closed convex subset of $X$ satisfying 
$\overline{\mathrm{dom}A} \subseteq C\subseteq  \mathrm{ran}(\mathrm{Id}+\gamma A)$ for all $\gamma>0$. It is easy to 
see that the results from Section~\ref{VAMe-results} specialized to $e_n=0$ hold in this setting
with basically the same proofs. Hence, Propositions~\ref{rate-as-reg-VAM-1}, \ref{rate-as-reg-VAM-2}  are true
in this more general setting, too.

\subsection{Rates for the HPPA iteration}

Another particular case of the VAMe iteration is the (inexact) HPPA iteration:

$$
\mathrm{HPPA} \qquad x_0=x\in X, \quad x_{n+1}=\alpha_n u +(1-\alpha_n)J_{\lambda_n}^Ax_n +e_n,
$$
obtained by letting $f(x)=u\in X$ in the definition \eqref{def-VAMe} of VAMe. Obviously, the 
constant mapping $f(x)=u$ is an $\alpha$-contraction with $\alpha=0$. 

Theorems~\ref{main-rate-as-reg-VAMe-1}, \ref{main-rate-as-reg-VAMe-2} hold for the HPPA iteration with $K_z\in \N^*$ such that 
$$
K_z\geq \max\left\{\|x-z\|, \|u-z\|\right\} + \left\lceil\sum\limits_{i=0}^{\sigmaoneen(0)}\|e_i\|\right\rceil + 1.
$$
Furthermore, Corollaries~\ref{cor-as-reg-qualitative-1}, \ref{cor-as-reg-qualitative-2} are 
true for the HPPA $(x_n)$, too.

By letting $e_n=0$, we get that Propositions~\ref{rate-as-reg-VAM-1}, \ref{rate-as-reg-VAM-2} hold with 
$\KzVAM \in \N^*$ such that 
$$\KzVAM \geq \max\left\{\|x-z\|, \|u-z\|\right\}.$$

Methods of proof mining were applied in \cite{LeuPin21,Pin21}  to the HPPA  iteration associated 
to a maximal monotone operator $A$ in a Hilbert space $X$ to obtain quantitative results on its asymptotic behaviour, including rates of 
$\left(\left(J_{\lambda_n}^A\right), \, J_{\lambda_m}^A (m\in\N)\right)$-asymptotic regularity.

In this paper we compute such rates for the more general setting of $m$-accretive operators in Banach spaces.

\subsection{Linear rates for concrete instances of the parameter sequences}\label{linear-rates-as-reg}

\cite[Lemma~3]{SabSht17} or its slight variation, Lemma~\ref{lem:sabach-shtern-v2}, were applied recently  to obtain linear rates of asymptotic regularity for the Tikhonov-Mann and 
modified Halpern iterations \cite{CheKohLeu23}, the alternating Halpern-Mann iteration \cite{LeuPin24} and for different Halpern-type 
iterations \cite{CheLeu24}. In the sequel, we use  Lemma~\ref{lem:sabach-shtern-v2} to compute
linear rates for the VAMe iteration for two specific choices of the parameter sequences.

In the following, for all $n\in\N$,  
\begin{align*}
\alpha_n=\frac{2}{(1-\alpha)(n+J)},  \qquad \text{where~}J=2\left\lceil\frac{1}{1 - \alpha}\right\rceil.
\end{align*}
As  $(\alpha_n)$ is decreasing, we have that $\alpha_n\leq \alpha_0 = \frac{2}{(1-\alpha)J}\leq 1$. Thus, $\alpha_n$ is  a sequence in 
$[0,1]$. 

\subsubsection{A first example}

For all $n\in\N$, consider 
\begin{align*}
\lambda_n=\lambda >0  \text{~and~} e_n=0.
\end{align*}
Then  $(x_n)$ is the VAM iteration with a single mapping 
$J_\lambda^A$, which is nonexpansive.
It follows that $(x_n)$ is a particular case of the viscosity 
version of the Halpern iteration (where one considers an arbitrary nonexpansive mapping $T$ 
instead of $J_\lambda$) introduced by Xu \cite{Xu04} and studied by Sabach and 
Shtern \cite{SabSht17} under the name of sequential averaging method (SAM). As an application of \cite[Lemma~3]{SabSht17}, 
Sabach and Shtern obtained  linear rates of  ($T$-)asymptotic regularity for SAM. 
Cheval and the second author \cite{CheLeu24} applied Lemma~\ref{lem:sabach-shtern-v2}  to compute 
such linear rates in the more general setting of $W$-hyperbolic spaces; these rates hold in our setting, too.

Consider the following mappings, defined in \cite[Section 3.2, (15), (16)]{CheLeu24}, with notations adapted to this paper:
\begin{align*}
\Phi_0(k) & = 4\KzVAM \left\lceil\frac{1}{1 - \alpha}\right\rceil^2(k+1)-2\left\lceil\frac{1}{1 - \alpha}\right\rceil,\\
\Psi_0(k) & = \left(4\KzVAM\left\lceil\frac{1}{1 - \alpha}\right\rceil^2+
4\KzVAM\left\lceil\frac{1}{1 - \alpha}\right\rceil\right)(k+1)-2\left\lceil\frac{1}{1 - \alpha}\right\rceil,
\end{align*}
where  $z\in \mathrm{zer}A$ and $\KzVAM\in \N^*$ satisfies \eqref{def-KzVAM}.

Then $(x_n)$ is asymptotically regular with rate $\Phi_0$ and $J_{\lambda}^A$-asymptotically regular with rate $\Psi_0$. 
As $\lambda_n=\lambda$ for all $n\in\N$, obviously $(J_{\lambda_n}^A)$-asymptotic regularity and  $J_{\lambda_m}^A$-asymptotic regularity (for $m\in\N$) 
coincide with $J_{\lambda}^A$-asymptotic regularity of $(x_n)$.

\subsubsection{A second example}

Let us take, for all $n\in\N$,
\begin{align*}
\lambda_n & =\frac{n+J}{n+J-1} \text{~and~} e_n=\frac{1}{(n+J)^2}\ebase,  \qquad \text{where~}\ebase\in X.
\end{align*}

Since $\displaystyle \sum\limits_{n=0}^\infty \frac1{(n+J)^2} < \frac1{J-1}$, it follows that $\hthreeen $ holds with 
$\displaystyle \sigmathreeen=\left\lceil\frac{\|\ebase\|}{J-1}\right\rceil$. 

Let $z\in \text{zer}A$ and $K_z\in \N^*$ satisfying 
\begin{equation}
K_z\geq \max\left\{\|x-z\|,\frac{\|f(z)-z\|}{1-\alpha}\right\} + \left\lceil\frac{\|\ebase\|}{J-1}\right\rceil.
\end{equation}

\begin{proposition}\label{linear-rate-as-reg-VAM-example}
For all $n\in\N$, 
\begin{align}\label{linear-ineq-VAM-example}
\|x_{n+1}-x_n\|\le \frac{3JK_z+\|\ebase\|}{(1-\alpha)(n + J)}.
\end{align}
Thus, 
\begin{align*}
\Phi_0(k)& =\left(3JK_z+\left\lceil\|\ebase\|\right\rceil\right)\left\lceil\frac{1}{1-\alpha}\right\rceil(k+1) -J \\
& = 6K_z \left\lceil\frac{1}{1-\alpha}\right\rceil^2(k+1) + \left\lceil\|\ebase\|\right\rceil\left\lceil\frac{1}{1-\alpha}\right\rceil(k+1)- 2\left\lceil\frac{1}{1 - \alpha}\right\rceil
\end{align*}
is a linear rate of asymptotic regularity of $(x_n)$.
\end{proposition}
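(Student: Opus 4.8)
The plan is to apply Lemma~\ref{lem:sabach-shtern-v2} to the sequence $s_n = \|x_{n+1}-x_n\|$, exactly as Theorem~\ref{main-rate-as-reg-VAMe-1} applied Proposition~\ref{quant-lem-Xu02-bn-0}, but now exploiting the explicit form of $(\alpha_n)$ and $(\lambda_n)$ to recognize the recursion in the precise shape required by Sabach--Shtern. First I would compute, for $a_n := \frac{N}{\gamma(n+J)}$ with the choices $\gamma = 1-\alpha$ and $N = 2$ (so that $a_n = \alpha_n$ and $\gamma a_n = (1-\alpha)\alpha_n$), the difference $a_n - a_{n+1} = \frac{2}{(1-\alpha)(n+J)(n+J+1)}$. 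The key algebraic observation is that with $\lambda_n = \frac{n+J}{n+J-1}$ one has $1 - \frac{\lambda_{n+1}}{\lambda_n}$ (or $1-\frac{\lambda_n}{\lambda_{n+1}}$) of order $\frac{1}{(n+J)(n+J+1)}$ as well, so that the ``coefficient error'' term $2K_z(1-\alpha_{n+1})\bigl|1-\frac{\lambda_{n+1}}{\lambda_n}\bigr|$ appearing in $M_{z,n}$ is bounded by $(a_n - a_{n+1})$ times a constant; likewise $|\alpha_{n+1}-\alpha_n| = a_n - a_{n+1}$ exactly, and $\|e_{n+1}-e_n\| \le \|e_{n+1}\| + \|e_n\| = \frac{\|\ebase\|}{(n+J+1)^2} + \frac{\|\ebase\|}{(n+J)^2}$, which again is dominated by a constant multiple of $a_n - a_{n+1}$.

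Concretely, I would start from inequality \eqref{main-ineq-VAMe-lambdaone} (or \eqref{main-ineq-VAMe-lambdastarone}, whichever makes the $\lambda$-computation cleaner), use Lemma~\ref{honeen-htwoen-htwoen-Kz}.\eqref{hthreeen-Kz} to replace $K_{z,n}$ by $K_z$, and bundle all the error contributions into a single term $(a_n - a_{n+1})c_n$ with $(c_n)$ bounded above by some $L$ of the form $c\,K_z + c'\|\ebase\|$, and $s_0 = \|x_1 - x_0\| \le 2K_z \le L$. One checks $J \ge N = 2$ and $\gamma = 1-\alpha \in (0,1]$ hold by the definition $J = 2\lceil \frac{1}{1-\alpha}\rceil$. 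Then Lemma~\ref{lem:sabach-shtern-v2} gives $s_n \le \frac{JL}{(1-\alpha)(n+J)}$ directly, and I would arrange the constants so that $JL$ collapses to $3JK_z + \|\ebase\|$, yielding \eqref{linear-ineq-VAM-example}.

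From \eqref{linear-ineq-VAM-example} the rate $\Phi_0$ is extracted routinely: $\|x_{n+1}-x_n\| \le \frac{1}{k+1}$ is guaranteed as soon as $n + J \ge (3JK_z + \|\ebase\|)(k+1)/(1-\alpha)$, i.e. $n \ge (3JK_z + \lceil\|\ebase\|\rceil)\lceil\frac{1}{1-\alpha}\rceil(k+1) - J$; substituting $J = 2\lceil\frac{1}{1-\alpha}\rceil$ and expanding gives the stated closed form. The main obstacle I anticipate is the bookkeeping in the second step: one must verify that \emph{every} term in $M_{z,n} + \|e_{n+1}-e_n\|$ is genuinely of the form $(a_n - a_{n+1})\cdot(\text{bounded})$ with a clean enough constant that the final bound $JL$ simplifies to $3JK_z + \|\ebase\|$ rather than something messier; this requires computing $1 - \frac{\lambda_{n+1}}{\lambda_n}$ exactly and comparing denominators carefully, and choosing between \eqref{main-ineq-VAMe-lambdaone} and \eqref{main-ineq-VAMe-lambdastarone} so that the factor $(1-\alpha_{n+1}) \le 1$ is absorbed without waste.
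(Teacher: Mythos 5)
Your proposal follows the paper's proof in all essentials: the same starting inequality \eqref{main-ineq-VAMe-lambdaone} with $K_{z,n}$ replaced by $K_z$, the same application of Lemma~\ref{lem:sabach-shtern-v2} with $\gamma=1-\alpha$, $N=2$, $a_n=\alpha_n$, $s_n=\|x_{n+1}-x_n\|$, the identity $|\alpha_{n+1}-\alpha_n|=a_n-a_{n+1}$, and the same final conversion of $s_n\le \frac{JL}{(1-\alpha)(n+J)}$ into $\Phi_0$. The structure is correct and would yield \emph{a} linear rate.

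However, the one step you flag as the ``main obstacle'' is genuinely where your bounds, as written, fail to produce the stated constant $3JK_z+\|\ebase\|$. For the error term you propose $\|e_{n+1}-e_n\|\le\|e_{n+1}\|+\|e_n\|\approx \frac{2\|\ebase\|}{(n+J)^2}$; dividing by $a_n-a_{n+1}=\frac{2}{(1-\alpha)(n+J)(n+J+1)}$ gives a ratio tending to $(1-\alpha)\|\ebase\|$, and since $J(1-\alpha)\ge 2$ this contributes at least $2\|\ebase\|$ to $JL$, not $\|\ebase\|$. The paper instead exploits that $e_n$ and $e_{n+1}$ are scalar multiples of the \emph{same} vector $\ebase$, so $\|e_{n+1}-e_n\|=\bigl(\frac{1}{(n+J)^2}-\frac{1}{(n+J+1)^2}\bigr)\|\ebase\|=O\bigl((n+J)^{-3}\bigr)\|\ebase\|$, one order better, which yields the ratio bound $\|\ebase\|/(n+J)\le\|\ebase\|/J$ and hence exactly $\|\ebase\|$ after multiplying by $J$. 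A similar precision issue arises in the $\lambda$-term: to get the contribution down to $K_z$ (so that the total is $3K_z$) you must use the exact value $1-\alpha_{n+1}=\frac{(1-\alpha)(n+1+J)-2}{(1-\alpha)(n+1+J)}$, which makes the ratio $\frac{(1-\alpha_{n+1})}{|\alpha_{n+1}-\alpha_n|}\bigl|1-\frac{\lambda_{n+1}}{\lambda_n}\bigr|=\frac{(1-\alpha)(n+1+J)-2}{2(n+J)}<\frac12$; the crude bound $1-\alpha_{n+1}\le 1$ only gives roughly $\frac34$ in the worst case. So either carry out these two exact computations, or accept a larger constant and a correspondingly larger (still linear) $\Phi_0$ than the one claimed in the proposition.
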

\begin{proof}
By Lemma~\ref{honeen-htwoen-htwoen-Kz}.\eqref{hthreeen-Kz}, we have that Lemma~\ref{xn-bound-as-reg} 
and \eqref{main-ineq-VAMe-lambdaone} hold with $K_z$ defined as above instead of $K_{z,n}$ or $K_{z,n+1}$. 
Applying \eqref{main-ineq-VAMe-lambdaone}, we get that for all $n\in\N$,
\begin{align*}
\|x_{n+2}-x_{n+1}\| & \le  (1-(1-\alpha)\alpha_{n+1})\|x_{n+1}-x_n\| + P_z,
\end{align*}
where 
\begin{align*}
P_z &= 2K_z\left(|\alpha_{n+1}-\alpha_n|
+ (1-\alpha_{n+1})\left| 1-\frac{\lambda_{n+1}}{\lambda_n} \right|\right) + \|e_{n+1}-e_n\|\\
& = |\alpha_{n+1}-\alpha_n|\left(2K_z
\left(1+\frac{(1-\alpha_{n+1})}{|\alpha_{n+1}-\alpha_n|}\left| 1-\frac{\lambda_{n+1}}{\lambda_n} \right| \right)+ 
\frac{\|e_{n+1}-e_n\|}{|\alpha_{n+1}-\alpha_n|}\right).
\end{align*}
As 
\begin{align*}
|\alpha_{n+1}-\alpha_n| & = \alpha_n-\alpha_{n+1}=\frac{2}{(1-\alpha)(n+J)(n+1+J)}, \\
1-\alpha_{n+1} & = \frac{(1-\alpha)(n+1+J)-2}{(1-\alpha)(n+1+J)},\\
\left| 1-\frac{\lambda_{n+1}}{\lambda_n} \right| &= 1-\frac{\lambda_{n+1}}{\lambda_n}=\frac{1}{(n+J)^2}, \\
\end{align*}
we have that
\begin{align*}
\frac{(1-\alpha_{n+1})}{|\alpha_{n+1}-\alpha_n|}\left| 1-\frac{\lambda_{n+1}}{\lambda_n} \right| & = 
\frac{\big((1-\alpha)(n+1+J)-2\big)(n+J)}2 \cdot \frac{1}{(n+J)^2}\\
& \leq \frac{n+J-1}{2(n+J)}<\frac12.
\end{align*}
Furthermore, 
\begin{align*}
\frac{\|e_{n+1}-e_n\|}{|\alpha_{n+1}-\alpha_n|} & = \frac{(2(n+J)+1)\|\ebase\|(1-\alpha)}{2(n+J)(n+J+1)} 
\leq \frac{\|\ebase\|}{n+J}\leq \frac{\|\ebase\|}J.
\end{align*}
It follows that for all $n\in\N$,
\begin{align*}
\|x_{n+2}-x_{n+1}\| & <  (1-(1-\alpha)\alpha_{n+1})\|x_{n+1}-x_n\| +  (\alpha_n-\alpha_{n+1})\left(3K_z + \frac{\|\ebase\|}J\right). 
\end{align*}
One can easily see that Lemma~\ref{lem:sabach-shtern-v2} can be applied  with 
\begin{center}
$s_n= \|x_{n+1}-x_n\|$,  $L=3K_z+\frac{\|\ebase\|}J$,  $N=2$,  $J=2\left\lceil\frac{1}{1 - \alpha}\right\rceil$,  
$\gamma=1-\alpha$,  $a_n=\alpha_n$, 
$c_n=3K_z+\frac{\|\ebase\|}J$ 
\end{center}
to conclude that \eqref{linear-ineq-VAM-example} holds and, 
as a consequence, $\Phi_0$ is a rate of asymptotic regularity of $(x_n)$.
\end{proof}

\begin{proposition} \label{linear-rate-J-as-reg-VAM-example}
Define 
\begin{align*}
\Psi_0(k)  & =  18K_z \left\lceil\frac{1}{1-\alpha}\right\rceil^2(k+1) + 
3\left\lceil\|\ebase\|\right\rceil\left\lceil\frac{1}{1-\alpha}\right\rceil(k+1)- 
2\left\lceil\frac{1}{1 - \alpha}\right\rceil,\\
\Theta_0(k) & = 
36K_z \left\lceil\frac{1}{1-\alpha}\right\rceil^2(k+1) + 6\left\lceil\|\ebase\|\right\rceil\left\lceil\frac{1}{1-\alpha}\right\rceil(k+1) - 
2\left\lceil\frac{1}{1 - \alpha}\right\rceil.
\end{align*}
Then $\Psi_0$ is a linear rate of $\left(J_{\lambda_n}^A\right)$-asymptotic regularity of $(x_n)$ and 
$\Theta_0$ is a linear rate of $J_{\lambda_m}^A$-asymptotic regularity of $(x_n)$ for every $m\in\N$. 
\end{proposition}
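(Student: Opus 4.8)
The plan is to apply Theorem~\ref{main-rate-as-reg-VAMe-2} with the concrete sequences of this subsection, taking as $\Phi$ the rate $\Phi_0$ obtained in Proposition~\ref{linear-rate-as-reg-VAM-example}. First I would verify the standing hypotheses of Theorem~\ref{main-rate-as-reg-VAMe-2}: $\htwoen$ holds because $\|e_n\| = \|\ebase\|/(n+J)^2 \to 0$, with an explicit rate $\sigmatwoen$ one reads off from $(n+J)^2 \ge k+1$, e.g.\ $\sigmatwoen(k) = \left\lceil\sqrt{k+1}\right\rceil$ (or any such linear-in-$\sqrt{k}$ bound); and $K_z$ as fixed above satisfies \eqref{def-Kz-main}, since we already noted $\hthreeen$ holds with $\sigmathreeen = \left\lceil\|\ebase\|/(J-1)\right\rceil$ and the $\left\lceil\sum_{i=0}^{\sigmaoneen(0)}\|e_i\|\right\rceil+1$ appearing in \eqref{def-Kz-main} is dominated by $\left\lceil\|\ebase\|/(J-1)\right\rceil$ (this uses Lemma~\ref{honeen-htwoen-htwoen-Kz}.\eqref{honeen-htwoen-hthreen}; alternatively one checks it directly).

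Next I would discharge the remaining hypotheses $\hthreealpha$ and $\htwolambda$ needed for the two parts. For $\hthreealpha$: $\alpha_n = \frac{2}{(1-\alpha)(n+J)}\to 0$, and $\alpha_n \le \frac1{k+1}$ as soon as $n \ge \frac{2(k+1)}{1-\alpha} - J$, giving $\sigmathreealpha(k) = \max\left\{\left\lceil \frac{2(k+1)}{1-\alpha}\right\rceil - J,\, 0\right\}$, which one bounds above by $2\left\lceil\frac1{1-\alpha}\right\rceil(k+1)$. For $\htwolambda$: $\lambda_n = \frac{n+J}{n+J-1} \ge 1$ for all $n$, so $\htwolambda$ holds with $\Lambda = 1$ and $N_\Lambda = 0$. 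Also $\lambda_m = \frac{m+J}{m+J-1} \le 2$, so one may take $\boundlambdam = 2$ uniformly in $m$, which is why $\Theta_0$ does not depend on $m$.

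Now I would substitute into the formulas of Theorem~\ref{main-rate-as-reg-VAMe-2}. For $\Psi$ we need $\Psi(k) = \max\{\sigmathreealpha(6K_z(k+1)-1),\, \Phi_0(3k+2),\, \sigmatwoen(3k+2)\}$; using the three linear upper bounds just recorded (and noting $\sigmatwoen(3k+2) = \left\lceil\sqrt{3k+3}\right\rceil$ is dominated by the other two terms, e.g.\ by $\Phi_0(3k+2)$ since $K_z \ge 1$), one obtains that $\max$ is bounded by the linear function $18K_z\left\lceil\frac1{1-\alpha}\right\rceil^2(k+1) + 3\left\lceil\|\ebase\|\right\rceil\left\lceil\frac1{1-\alpha}\right\rceil(k+1) - 2\left\lceil\frac1{1-\alpha}\right\rceil$, i.e.\ the stated $\Psi_0$, after plugging in $\Phi_0(3k+2)$ and simplifying $3(3k+3) = 9(k+1)$ against the coefficient $6K_z\left\lceil\frac1{1-\alpha}\right\rceil^2$ in $\Phi_0$. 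Since $\Psi_0 \ge \Psi$ pointwise, $\Psi_0$ is again a rate of $\left(J_{\lambda_n}^A\right)$-asymptotic regularity. Then for $\Theta_m$ with $\Lambda = 1$, $N_\Lambda = 0$, $\boundlambdam = 2$: $\Theta_m(k) = \max\{0,\, \Psi(2(k+1)-1),\, \Psi(2k+1)\} = \Psi(2k+1) \le \Psi_0(2k+1)$, and plugging $2k+1$ into $\Psi_0$ and using $2(2k+2) = 4(k+1)$ against the coefficients yields exactly the stated $\Theta_0$; since this bound is independent of $m$, it serves for every $m$.

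The routine part is the arithmetic of composing the linear bounds and checking that the chosen simplified closed forms $\Psi_0,\Theta_0$ dominate the raw $\max$-expressions; the only place requiring a small argument is confirming that $\sigmatwoen(3k+2)$ and the constant/$\sqrt{\cdot}$-type contributions are absorbed by the genuinely linear terms (which holds because $K_z \ge 1$ and $\left\lceil\frac1{1-\alpha}\right\rceil \ge 1$), and that the $-J$ and $-2\left\lceil\frac1{1-\alpha}\right\rceil$ subtractions stay $\le$ the claimed forms rather than causing the argument of $\sigmaonealpha$-type maps to drop below the range where Lemma~\ref{thetngeqn-2} was needed — but here $\Phi_0$ is already an explicit linear function, so no such subtlety arises and the composition is purely mechanical.
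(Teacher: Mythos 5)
Your proposal follows essentially the same route as the paper's proof: apply Theorem~\ref{main-rate-as-reg-VAMe-2} with $\Phi=\Phi_0$ from Proposition~\ref{linear-rate-as-reg-VAM-example}, instantiate $\sigmathreealpha$, $\sigmatwoen$, $\Lambda=1$, $N_\Lambda=0$, $\boundlambdam=2$, and observe that $\Phi_0(3k+2)=\Psi_0(k)$ dominates the other terms of the max and that $\Theta_0(k)=\Psi_0(2k+1)$. The only slip is that your candidate $\sigmatwoen(k)=\left\lceil\sqrt{k+1}\right\rceil$ should carry the factor $\|\ebase\|$ (the paper takes $\max\left\{\left\lceil\sqrt{\|\ebase\|(k+1)}\right\rceil-J,0\right\}$), but since this square-root term is absorbed by $\Phi_0(3k+2)$ in any case, the argument is unaffected.
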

\begin{proof}
We can apply Theorem~\ref{main-rate-as-reg-VAMe-2}, as  $\hthreealpha$ holds 
with $\sigmathreealpha(k)=Jk$, $\htwoen$ holds with 
$\sigmatwoen(k)=\max\left\{\left\lceil\sqrt{\|\ebase\|(k+1)}\right\rceil-J, 0 \right\}$, 
$\htwolambda$ holds with $\Lambda=1$, $N_\Lambda=0$, and $\boundlambdam=2 \geq \lambda_m$ for all $m\in\N$. 

Using also Theorem~\ref{linear-rate-as-reg-VAM-example}, it follows that 
$(x_n)$ is $\left(J_{\lambda_n}^A\right)$-asymptotically 
regular with rate 
\begin{align*}
\Psi(k) &= \max\{\sigmathreealpha(6K_z(k+1)-1), \Phi_0(3k+2), \sigmatwoen(3k+2)\}\}
\end{align*}

Since 
\begin{align*}
\sigmathreealpha(6K_z(k+1)-1) & = 6JK_z(k+1)-J,\\
\Phi_0(3k+2) & = 9JK_z\left\lceil\frac{1}{1-\alpha}\right\rceil(k+1) + 3\left\lceil\|\ebase\|\right\rceil\left\lceil\frac{1}{1-\alpha}\right\rceil(k+1)-J,\\
\sigmatwoen(3k+2) & = \max\left\{\left\lceil\sqrt{3\|\ebase\|(k+1)}\right\rceil-J, 0 \right\},
\end{align*}
we have that $\sigmathreealpha(6K_z(k+1)-1), \sigmatwoen(3k+2) < \Phi_0(3k+2)$, hence 
$$\Psi(k) = \Phi_0(3k+2) = \Psi_0(k).$$ 
Applying Proposition~\ref{main-rate-as-reg-VAMe-2}.\eqref{main-rate-as-reg-VAMe-2-ii}, we get that for every $m\in\N$, 
$(x_n)$ is $J_{\lambda_m}^A$-asymptotically 
regular with rate 
\begin{align*}
\Theta_m(k) & =\max\{N_\Lambda,\Psi_0(\boundlambdam\Lambda(k+1)-1),\Psi_0(2k+1)\}=\Psi_0(2k+1) = \Theta_0(k).
\end{align*}
\end{proof}

\mbox{}

\section*{Acknowledgements}

Paulo Firmino acknowledges the support of FCT – Fundação para a Ciência e Tecnologia through a doctoral scholarship with reference number 2022.12585.BD as well as the support of Fundação para a Ciência e Tecnologia via the research center CMAFcIO (Universidade de Lisboa) under funding \url{https://doi.org/10.54499/UIDB/04561/2020}.

\end{document}